\theoremstyle{definition}
\newtheorem{theorem}{Theorem}[section]
\newtheorem{proposition}[theorem]{Proposition}
\begin{document}

\title{N-sphere chord length distribution}

\author{Panagiotis~Sidiropoulos
\thanks{P.~Sidiropoulos is with the Imaging Group, Mullard Space Science Laboratory / University College London, UK, ~p.sidiropoulos@ucl.ac.uk.}}
\maketitle

\begin{abstract}
\noindent This work studies the chord length distribution, in the case where both ends lie on a $N$-dimensional hypersphere ($N \geq 2$). Actually, after connecting this distribution to the recently estimated surface of a hyperspherical cap \cite{SLi11}, closed-form expressions of both the probability density function and the cumulative distribution 
function are straightforwardly extracted, which are followed by a discussion on its basic properties, among which its dependence from the hypersphere dimension. Additionally, the distribution of the dot product of unitary vectors is estimated, a problem that is related to the chord length.
\end{abstract}


\IEEEdisplaynotcompsoctitleabstractindextext

\IEEEpeerreviewmaketitle

\section{Introduction}

The chord length distribution, when its ends lie on a locus, is a rather challenging problem, which still is only partially solved at present. However, recently a number of researchers have established closed-form solutions for a number of special locus types, such as the chord length distribution of a regular polygon \cite{UBasel14}, a parallelogram \cite{SRen12} and a cube \cite{JPhilip07}. Additionally, properties of the distribution in more generic scenarios have been examined and modelled, such as the average chord length in a compact convex subset of a n-dimensional Euclidean space \cite{BBurgstaller09}. 

The chord length distribution of a hypersphere is still not available, even though it presents not only mathematical interest, but also can be employed in a multitude of applications. For example, since by default normalised vectors lie on a hypersphere of radius $R=1$, the $N$-sphere chord length distribution could be used as a ``randomness'' benchmark to give a hint whether a set of normalised vectors is selected uniformly and independently. On the other hand, in applications where the input data are known only through their distances, the detection of $N$-dimensional hyperspheres through their ``chord distribution footprint'' could provide a low boundary in the dimension of the unknown input data. Moreover, this distribution could be used to train classifiers only with positive examples in cases where the negative examples can be modelled as uniform and independently selected distances.

Recently, S. Li achieved to produce a closed-form expression of the surface of a hyperspherical cap \cite{SLi11}. In this formula the hyperspherical-cap surface is given as a fraction of the total hypersphere surface. In this work it is proven that the chord length distribution in a hypersphere reduces to the ratio of the hyperspherical-cap surface over the total hypersphere surface. This proof, along with the resulting probability density function (pdf) and cumulative distribution 
function (cdf) are introduced in Section \ref{tdonvd}, followed by the properties of the new distribution, as well as its dependence from the hypersphere dimension $N$, in Section \ref{subsec:bp}. The (determined from the chord length distribution) unitary vector dot product distribution is examined in Section \ref{sec:uvdp}, while, Section \ref{sec:cr} concludes this work.

\section{N-sphere chord length distribution}
\label{tdonvd}

Let $p_i = \{p_{i1}~p_{i2}~p_{i3}~...~p_{iN}\}$ be points selected uniformly and independently from the surface of a $N$-dimensional hypersphere of radius $R$, i.e., $\forall ~ p_i, ~p_{i1}^2 + p_{i2}^2 +...p_{iN}^2=R^2$. The pairwise Euclidean distances $d(i,j), i,j \in \{1,2,...N\}, i \neq j$ of $p_i$, $p_j$ generate a set $d_k$ of distances ($k~=~N(N-1)/2$). We would like to estimate the distribution that the $d_k$ distance values follow. Since the distances $d_k$ are invariant to the coordinate system, it is assumed that this is selected so as the first end of the chord is always
$\{0,0,0...0,R\}$. The second end of the chord determines the chord length. 

When $N=2$ the hypersphere is a circle and the objective degenerates to the known (e.g., \cite{EWeisstein2}) circle chord length distribution. As a matter of fact, the pdf $f_2(d)$ and the cdf $F_2(d)$ are given by the following formulas:

\begin{equation}
\label{eq:circle_pdf} f_2(d) = \frac{1}{\pi}\frac{1}{\sqrt{1-\frac{d^2}{2R^2}}}
\end{equation}

\begin{equation}
\label{eq:circle_cdf} F_2(d) = \frac{cos^{-1}(1-\frac{d^2}{2R^2})}{\pi}
\end{equation}
However, in the general case (i.e., when $N \geq 2$) there are no closed-forms expressions giving neither the pdf $f_N(d)$ nor the cdf $F_N(d)$. In this paper we introduce these formulas. In order to achieve this, we need to prove that

\begin{proposition}
\label{Lemma1}
The locus of the $N$-sphere points that have a constant distance $d$ from a fixed point $p$ on it is a $(N-1)$-sphere of radius $a=\sqrt{d^2-\frac{d^4}{4R^2}}$. 
\end{proposition}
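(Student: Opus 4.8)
The plan is to realize the locus as the intersection of two hyperspheres and to exploit the fact that subtracting their defining equations eliminates all quadratic terms, leaving a single linear constraint. Using the coordinate freedom already granted in the text, I place the fixed point at $p = \{0,0,\ldots,0,R\}$, so that a point $q = \{x_1, x_2, \ldots, x_N\}$ of the locus must simultaneously satisfy the hypersphere equation $\sum_{i=1}^{N} x_i^2 = R^2$ and the constant-distance equation $\sum_{i=1}^{N-1} x_i^2 + (x_N - R)^2 = d^2$.

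First I would expand the second equation and substitute the first one into it. The quadratic terms cancel, and after collecting the surviving terms one is left with the single linear relation $2R^2 - 2R x_N = d^2$, i.e. $x_N = R - \frac{d^2}{2R}$. This is the crucial simplification: it shows that every point of the locus lies in the fixed hyperplane $x_N = R - d^2/(2R)$, which is parallel to $x_N = 0$ and whose position depends only on $d$ and $R$.

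Next I would reinsert this value of $x_N$ into the hypersphere equation to determine the remaining coordinates. This gives $\sum_{i=1}^{N-1} x_i^2 = R^2 - x_N^2$, and a short computation yields $R^2 - x_N^2 = d^2 - \frac{d^4}{4R^2}$. Hence the first $N-1$ coordinates range over a sphere of radius $a = \sqrt{d^2 - d^4/(4R^2)}$ inside the above hyperplane, which is precisely an $(N-1)$-sphere of the claimed radius, completing the argument.

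The computation itself is routine, so the only points requiring genuine care are structural rather than algebraic: I must justify that the coordinate choice entails no loss of generality (this is already supplied by the rotational symmetry invoked in the text), and I should confirm that the resulting object is a bona fide $(N-1)$-sphere rather than a degenerate set, i.e. that $a^2 = d^2(1 - d^2/(4R^2)) \ge 0$ for every admissible chord length $0 \le d \le 2R$, with the sphere collapsing to the single antipodal point exactly when $d = 2R$. I expect the linearization step — recognizing that subtracting the two quadratic equations produces the radical hyperplane and thereby forces $x_N$ to be constant — to be the conceptual crux, after which the identification of the radius is immediate.
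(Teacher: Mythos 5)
Your proposal is correct and follows essentially the same route as the paper's own proof: place $p$ at $\{0,0,\ldots,0,R\}$, subtract the constant-distance equation from the hypersphere equation to obtain the linear constraint fixing $x_N$, then substitute back to identify the $(N-1)$-sphere of radius $a=\sqrt{d^2-\frac{d^4}{4R^2}}$. Your added remark on non-degeneracy ($a^2 \geq 0$ for $0 \leq d \leq 2R$, with collapse to the antipodal point at $d=2R$) is a small refinement the paper omits, but it does not change the argument.
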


\begin{proof}
The fixed point $p$ have coordinates $\{0,0,0...0,R\}$. Then the points of the hypersphere that have distances $d$ from $p$ have coordinates $(x_{1}, x_{2}, x_{3},...,x_{N})$ that satisfy the following two equations:

\begin{equation}
\label{eq:111} x_{1}^2 + x_{2}^2 +...+x_{N}^2=R^2
\end{equation}

\begin{equation}
x_{1}^2 + x_{2}^2 +...+(x_{N}-R)^2=d^2
\end{equation}

By the above equations it follows that

\begin{equation}
R^2-2Rx_{N}=d^2-R^2,
\end{equation}

By replacing $x_{N}$ to Eq. (\ref{eq:111}) we get that

\begin{equation} \label{eq:1121}
x_{1}^2 + x_{2}^2 +...+x_{(N-1)}^2=d^2 - \frac{d^4}{4R^2}
\end{equation}
\end{proof}

The $(N-1)$-sphere of Eq. (\ref{eq:1121}) is the intersection of the $N$-sphere with the hyperplane $L: x_{N}=R-\frac{d^2}{2R}$. Since $\frac{\partial x_{N}}{\partial d} \leq 0$, all points of the N-sphere with distance $D$ from $p$, $D < d$, lie ``northern'' to the $(N-1)$-sphere (i.e., have larger $x_{N}$ value than the points in $L$), while all points with distance $D$ from $p$, $D > d$, lie ``southern'' to the $(N-1)$-sphere. It can be deduced that the above hyperplane cuts the hypersphere into two parts, each defined by the distance from $p$ being smaller or larger than $d$. By default, a hyperspherical cap is defined as the portion of a hypersphere that is cut by a hyperplane, thus the two above parts are hyperspherical caps. Consequently, it was shown that

\begin{proposition}
\label{theorem1}
The locus of the $N$-sphere points that have distance $D$, $D \leq d$ from a point on it is a hyperspherical cap of radius 
$a=\sqrt{d^2-\frac{d^4}{4R^2}}$. 
\end{proposition}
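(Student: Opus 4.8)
The plan is to build directly on Proposition \ref{Lemma1} and to reduce the statement to the \emph{definition} of a hyperspherical cap as the portion of a hypersphere cut off by a hyperplane. The first thing I would establish is that the Euclidean distance $D$ from the fixed point $p=\{0,0,\dots,0,R\}$ to an arbitrary sphere point $x=(x_1,\dots,x_N)$ depends only on, and monotonically on, the single coordinate $x_N$. Repeating the elimination carried out in the proof of Proposition \ref{Lemma1} but now keeping $D$ as a free variable gives the relation

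\begin{equation}
\label{eq:monotone} D^2 = 2R^2 - 2R\,x_N,
\end{equation}

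so that $x_N = R - \frac{D^2}{2R}$ and hence $\frac{\partial x_N}{\partial D} = -\frac{D}{R} \le 0$. This is exactly the monotonicity recorded in the paragraph preceding the statement, and it is the crux of the argument.

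From Eq.~(\ref{eq:monotone}) the sublevel condition $D \le d$ is equivalent to $x_N \ge R - \frac{d^2}{2R}$, so the locus in question is precisely the set of sphere points lying in the closed ``northern'' half-space bounded by the hyperplane $L:\; x_N = R - \frac{d^2}{2R}$. I would stress that this is a two-sided implication: every sphere point at distance at most $d$ satisfies the half-space inequality, and conversely every sphere point in that half-space has distance at most $d$. The monotonicity of Eq.~(\ref{eq:monotone}) is what guarantees the converse, i.e. that there are no stray points of small distance lying on the far side of $L$.

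It then remains only to identify this half-space slice with a cap. By the cited definition, a hyperspherical cap is exactly the portion of an $N$-sphere cut by a hyperplane; the hyperplane $L$ divides the sphere into two such portions, and we have just shown that the portion $\{x_N \ge R - \frac{d^2}{2R}\}$ coincides with $\{D \le d\}$. The boundary of this cap is the intersection of the sphere with $L$, which by Proposition \ref{Lemma1} is the $(N-1)$-sphere of radius $a=\sqrt{d^2-\frac{d^4}{4R^2}}$, fixing the radius claimed in the statement. I do not expect a genuine obstacle here, since the proposition is essentially a repackaging of Proposition \ref{Lemma1} together with a monotonicity remark; the only point requiring real care is the logical equivalence of the second paragraph, because a priori the level sets of $D$ could fail to foliate the sphere monotonically, and it is the linear form of Eq.~(\ref{eq:monotone}) that rules this out cleanly.
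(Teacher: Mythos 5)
Your proof is correct and follows essentially the same route as the paper: both derive the linear relation between $D^2$ and $x_N$, use its monotonicity to identify the sublevel set $\{D \le d\}$ with the portion of the sphere on one side of the hyperplane $L:\ x_N = R - \frac{d^2}{2R}$, and then invoke the definition of a hyperspherical cap together with Proposition~\ref{Lemma1} to fix the cap's radius $a=\sqrt{d^2-\frac{d^4}{4R^2}}$. Your writeup is somewhat more explicit than the paper's (which leaves the two-sided implication implicit in the remark $\frac{\partial x_N}{\partial d}\le 0$), but there is no substantive difference in the argument.
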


Since the maximum distance $d$ the cap radius $a$ and the cap height $h$ form a right triangle (Fig. \ref{fig:chord_circle}), the height of the cap is

\begin{equation}
\label{eq:height_cap} h = \frac{d^2}{2R}
\end{equation}

\begin{figure*}[htb]
\centering
\includegraphics[width=0.5\textwidth]{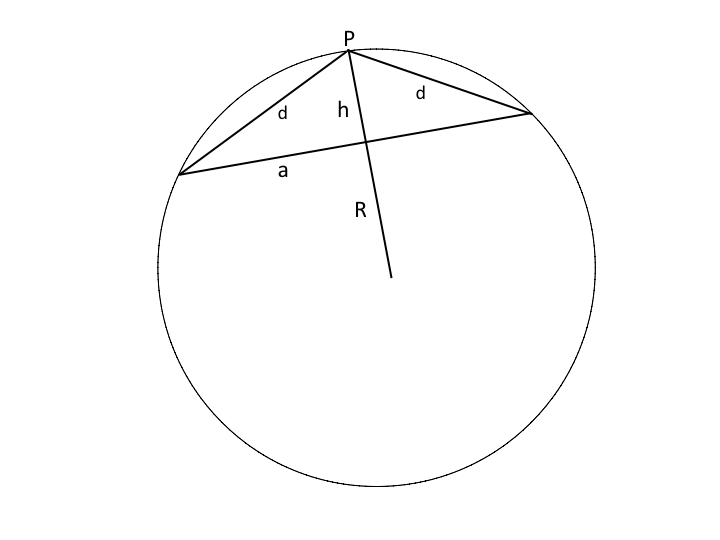}
\caption {A hyperspherical cap and the relation of the maximum distance $d$ from a point $P$, the hyperspherical cap height $h$ and its radius $a$.}
\label{fig:chord_circle}
\end{figure*}

Recently it was proven from S. Li \cite{SLi11} that the surface $A^{cap}_N(R)$ of a hyperspherical cap (that is smaller than a hyper-hemisphere, i.e., $h \leq R$) is given by the following formula:

\begin{equation}
\label{eq:li_surface} A^{cap}_N(R) = \frac{1}{2}A_N(R)I_{sin^2\phi}(\frac{N-1}{2},\frac{1}{2})
\end{equation}

In Eq. (\ref{eq:li_surface}), $N$ is the hypersphere dimension, $R$ its radius, $A_N(R)$ the hypersphere surface, $\phi$ the colatitude angle \cite{SLi11} and $I$ the regularised incomplete beta function given by

\begin{equation}
\label{eq:incomplete_beta} I_x(a,b) = \frac{B(x;a,b)}{B(a,b)} = \frac{\int_0^x t^{a-1} (1-t)^{b-1}dt}{\int_0^1 t^{a-1} (1-t)^{b-1}dt}
\end{equation}

The colatitude angle, the sphere radius and the hyperspherical cap satisfy the following \cite{SLi11}:

\begin{equation}
\label{eq:colatitude} h = (1-cos\phi)R
\end{equation}

By replacing $h$ from Eq. (\ref{eq:height_cap}) it follows that

\begin{equation}
\label{eq:colatitude2} \phi = cos^{-1} (1-\frac{d^2}{2R^2})
\end{equation}

Finally, from Proposition \ref{theorem1}, Eq. (\ref{eq:li_surface}) and Eq. (\ref{eq:colatitude2}) it is deduced that

\begin{proposition}
The cumulative distribution function of the $N$-sphere chord length is

\begin{equation} \label{eq:hypersphere_cdf} 
\centering
\begin{split}
P(D\leq d) = F_N(d) = \frac{1}{2}I_{\frac{d^2}{R^2}-\frac{d^4}{4R^4}}(\frac{N-1}{2},\frac{1}{2}), d<\sqrt{2}R \\
P(D\leq d) = F_N(d) = 1 - \frac{1}{2}I_{\frac{d^2}{R^2}-\frac{d^4}{4R^4}}(\frac{N-1}{2},\frac{1}{2}), d \geq \sqrt{2}R
\end{split}
\end{equation}
\end{proposition}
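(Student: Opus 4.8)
The plan is to identify the cumulative distribution function with a ratio of surface areas and then substitute the results already established. First I would observe that, because the second chord-end is drawn uniformly over the $N$-sphere surface, the probability $P(D \leq d)$ is simply the fraction of the total surface occupied by those points whose distance from the fixed point $p$ is at most $d$. By Proposition \ref{theorem1} this set is exactly a hyperspherical cap of radius $a = \sqrt{d^2 - d^4/(4R^2)}$, so that $P(D \leq d) = A^{cap}_N(R)/A_N(R)$.

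Next I would substitute Li's closed form, Eq. (\ref{eq:li_surface}), which gives $A^{cap}_N(R)/A_N(R) = \tfrac{1}{2} I_{\sin^2\phi}(\tfrac{N-1}{2},\tfrac{1}{2})$, and then eliminate the colatitude through Eq. (\ref{eq:colatitude2}). From $\cos\phi = 1 - d^2/(2R^2)$ a one-line computation yields $\sin^2\phi = 1 - \cos^2\phi = d^2/R^2 - d^4/(4R^4)$, which is precisely the argument appearing in the claimed expression. This immediately delivers the first branch, valid whenever the cap is sub-hemispherical.

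The main obstacle is the domain restriction on Li's formula, which holds only for caps with $h \leq R$, i.e. (via Eq. (\ref{eq:height_cap})) only for $d \leq \sqrt{2}R$. For $d \geq \sqrt{2}R$ the cap of near points is larger than a hemisphere and Eq. (\ref{eq:li_surface}) no longer applies directly. To handle this I would pass to the complement: the set of points at distance strictly greater than $d$ is itself a sub-hemispherical cap, now associated with the antipode of $p$ and colatitude $\pi - \phi$. Writing $P(D \leq d) = 1 - A^{cap,\,c}_N(R)/A_N(R)$ and applying Li's formula to this small cap, the key observation is that $\sin^2(\pi - \phi) = \sin^2\phi$, so the incomplete-beta argument is unchanged and still equals $d^2/R^2 - d^4/(4R^4)$. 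This produces the second branch $1 - \tfrac{1}{2} I_{\ldots}$. Finally, at the crossover $d = \sqrt{2}R$, where $h = R$ and $\sin^2\phi = 1$, both branches collapse to $\tfrac{1}{2} I_{1}(\tfrac{N-1}{2},\tfrac{1}{2}) = \tfrac{1}{2}$, confirming that the piecewise definition is continuous and that the two halves of the sphere are assigned equal probability as required by symmetry.
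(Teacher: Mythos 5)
Your proposal is correct and follows essentially the same route as the paper: identify $P(D\leq d)$ with the cap-to-sphere surface ratio via Proposition \ref{theorem1}, substitute Li's formula (Eq. (\ref{eq:li_surface})), and convert the colatitude via $\sin^2\phi = \frac{d^2}{R^2}-\frac{d^4}{4R^4}$. In fact you supply more detail than the paper does, which merely cites these three ingredients; in particular, your complementary-cap argument for the $d \geq \sqrt{2}R$ branch (using $\sin^2(\pi-\phi)=\sin^2\phi$) and the continuity check at $d=\sqrt{2}R$ make explicit steps the paper leaves implicit.
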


\begin{proposition}
The probability density function of the $N$-sphere chord length is:

\begin{equation} \label{eq:hypersphere_pdf}
f_N(d) = \frac{d}{R^2B(\frac{N-1}{2},\frac{1}{2})}(\frac{d^2}{R^2}-\frac{d^4}{4R^4})^{\frac{N-3}{2}}
\end{equation}
\end{proposition}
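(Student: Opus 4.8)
The plan is to obtain the density simply by differentiating the cumulative distribution function established in the preceding proposition, since $f_N(d) = \tfrac{d}{dd}F_N(d)$. Write $x(d) = \frac{d^2}{R^2} - \frac{d^4}{4R^4}$ for the argument of the regularised incomplete beta function, and recall that on $d < \sqrt{2}R$ we have $F_N(d) = \tfrac{1}{2}I_{x(d)}(\tfrac{N-1}{2}, \tfrac{1}{2})$. By the definition of $I$ in Eq. (\ref{eq:incomplete_beta}) together with the fundamental theorem of calculus and the chain rule, differentiation only touches the upper limit of the defining integral, so that
\begin{equation}
f_N(d) = \frac{1}{2 B(\frac{N-1}{2}, \frac{1}{2})}\, x^{\frac{N-3}{2}} (1-x)^{-\frac{1}{2}} \, \frac{dx}{dd}.
\end{equation}

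First I would compute the two elementary factors. The derivative of the upper limit is
\begin{equation}
\frac{dx}{dd} = \frac{2d}{R^2} - \frac{d^3}{R^4} = \frac{2d}{R^2}\Bigl(1 - \frac{d^2}{2R^2}\Bigr),
\end{equation}
and the crucial observation is that the complementary argument factors as a perfect square,
\begin{equation}
1 - x = 1 - \frac{d^2}{R^2} + \frac{d^4}{4R^4} = \Bigl(1 - \frac{d^2}{2R^2}\Bigr)^2.
\end{equation}
Hence $(1-x)^{-1/2} = \bigl|1 - \frac{d^2}{2R^2}\bigr|^{-1}$, and for $d < \sqrt{2}R$ the quantity inside is positive, so the factor $\bigl(1 - \frac{d^2}{2R^2}\bigr)$ appearing in $dx/dd$ cancels it exactly. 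Together with the cancellation of the $\tfrac12$ against the $2$ in $dx/dd$, this collapses the expression to the claimed $f_N(d) = \frac{d}{R^2 B(\frac{N-1}{2}, \frac{1}{2})}\,x^{\frac{N-3}{2}}$.

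The step requiring the most care, and the one I would treat as the main obstacle, is verifying that the second branch of the CDF, valid for $d \geq \sqrt{2}R$, yields the very same density, so that Eq. (\ref{eq:hypersphere_pdf}) holds as a single formula on the whole support $(0, 2R)$. Here $F_N = 1 - \tfrac12 I_x$ contributes an extra minus sign, the argument satisfies $1 - \frac{d^2}{2R^2} < 0$, and the square root in $(1-x)^{-1/2} = |1 - \frac{d^2}{2R^2}|^{-1}$ must be read as the absolute value rather than the signed quantity. I would check that these three sign contributions, namely the leading minus, the now-negative derivative $dx/dd$, and the absolute value, combine to reproduce the same positive density, and I would confirm continuity of $f_N$ at the junction $d = \sqrt{2}R$ (where $x = 1$). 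The remaining manipulations are routine algebra.
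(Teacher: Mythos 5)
Your proposal is correct and follows exactly the route the paper intends: the paper states the pdf immediately after the cdf proposition with no explicit proof, the derivation being precisely the differentiation $f_N(d)=\frac{d}{dd}F_N(d)$ that you carry out. Your verification of the key cancellation via $1-x=\bigl(1-\frac{d^2}{2R^2}\bigr)^2$, and your check that the sign contributions on the branch $d\geq\sqrt{2}R$ combine to give the same formula, are both valid and in fact supply details the paper omits.
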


\section{Some properties of the normalised vector distance distribution}
\label{sec:spotnvdd}

Equations (\ref{eq:hypersphere_cdf}) and (\ref{eq:hypersphere_pdf}) give the cdf and the pdf of the $N$-sphere chord length distribution, respectively. In this section, some of its basic properties are examined. Furthermore, we discuss how these properties vary with $N$, especially when $N \rightarrow \infty$.

\subsection{Basic properties}
\label{subsec:bp}

Eq. (\ref{eq:hypersphere_cdf}) suggests that the pdf $f_3(d)$, $f_4(d)$ and $f_5(d)$ are as follows:

\begin{equation} \label{eq:3d_pdf}
f_3(d) = \frac{d}{2R^2},
\end{equation}

\begin{equation} \label{eq:4d_pdf}
f_4(d) = \frac{4d^2}{\pi R^3} \sqrt{1-\frac{d^2}{4R^2}},
\end{equation}

\begin{equation} \label{eq:5d_pdf}
f_5(d) = \frac{3d^3}{4R^4}(1-\frac{d^2}{4R^2})
\end{equation}

while the cdf $F_3(d)$, $F_4(d)$ and $F_5(d)$ are

\begin{equation} \label{eq:3d_cdf}
F_3(d) = \frac{d^2}{4R^2},
\end{equation}

\begin{equation} \label{eq:4d_cdf} 
F_4(d) = \frac{cos^{-1}(1-\frac{d^2}{2R^2})}{\pi}-\frac{2}{\pi}(1-\frac{d^2}{2R^2})\sqrt{1-\frac{d^2}{4R^2}},
\end{equation}

\begin{equation} \label{eq:5d_cdf}
F_5(d) = \frac{3d^4}{16R^4}-\frac{3d^6}{96R^6},
\end{equation} 

When $d=\sqrt{2}R$, then $d^2/R^2-d^4/4R^4=1$. Since $I_1(a,b)=1$, Eq. (\ref{eq:hypersphere_cdf}) implies that $F_N(\sqrt{2}R) = 0.5$, i.e., that

\begin{proposition}
\label{proposition1}
The median value of the $N$-sphere chord length distribution is independent of $N$ and equal to $\sqrt{2}R$.
\end{proposition}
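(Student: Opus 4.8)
The plan is to work directly from the cumulative distribution function in Eq.~(\ref{eq:hypersphere_cdf}) together with the defining property of the median, namely that the median $m$ is the chord length satisfying $F_N(m) = 1/2$. Since the goal is to show this value equals $\sqrt{2}R$ for every $N$, I would simply substitute $d = \sqrt{2}R$ into the CDF and verify that the result is exactly $1/2$, with no residual dependence on $N$.

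First I would evaluate the argument $x = \frac{d^2}{R^2} - \frac{d^4}{4R^4}$ of the regularised incomplete beta function at $d = \sqrt{2}R$. Here $d^2/R^2 = 2$ and $d^4/(4R^4) = 1$, so $x = 2 - 1 = 1$. This is the crux of the argument: by Eq.~(\ref{eq:colatitude2}) the colatitude angle $\phi$ reaches $\pi/2$ precisely at $d = \sqrt{2}R$, which corresponds to the cutting hyperplane passing through the centre of the hypersphere, i.e.\ to the cap being exactly a hemisphere.

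Next I would invoke the definition of the regularised incomplete beta function in Eq.~(\ref{eq:incomplete_beta}), which gives $I_1(a,b) = B(1;a,b)/B(a,b) = 1$ for all parameters $a,b$, since the two integrals then coincide. Substituting $x = 1$ into Eq.~(\ref{eq:hypersphere_cdf}) (either branch, as the two agree at $d = \sqrt{2}R$ by continuity) yields $F_N(\sqrt{2}R) = 1 - \tfrac{1}{2}I_1(\tfrac{N-1}{2},\tfrac{1}{2}) = 1 - \tfrac{1}{2} = \tfrac{1}{2}$, valid for every $N \geq 2$.

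Finally, to confirm that $\sqrt{2}R$ is genuinely \emph{the} median rather than merely \emph{a} point where the CDF attains $1/2$, I would note that the pdf in Eq.~(\ref{eq:hypersphere_pdf}) is strictly positive on the open interval $(0, 2R)$, so $F_N$ is strictly increasing there and reaches $1/2$ at a unique point. Since $\sqrt{2}R$ carries no dependence on $N$, the median is independent of the hypersphere dimension. I do not anticipate a genuine obstacle; the only point demanding care is recognising that $x$ is non-monotonic in $d$ (it rises to $1$ at $d = \sqrt{2}R$ and falls back to $0$ at the diameter $d = 2R$), which is exactly why the CDF is split into two branches and why evaluating at the turning point $x = 1$ is the natural route.
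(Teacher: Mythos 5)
Your proposal is correct and follows essentially the same route as the paper: evaluating the argument $\frac{d^2}{R^2}-\frac{d^4}{4R^4}$ at $d=\sqrt{2}R$ to get $1$, invoking $I_1(a,b)=1$, and concluding $F_N(\sqrt{2}R)=\tfrac{1}{2}$ for all $N$. Your additional check that the median is unique (via strict positivity of the pdf on $(0,2R)$) is a small rigor improvement the paper leaves implicit, but the core argument is identical.
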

 
The probability density functions and cumulative distribution functions for $N = 2, 3, 4, 8, 16, 32$ are shown in Figs. \ref{fig:cdf_examples} and \ref{fig:pdf_examples}, respectively. In order to estimate the distribution moments, we use the variable transform $d^2/4R^2 =u$, which leads to the following formula:

\begin{equation} \label{eq:moments_equation}
\mu_k = \frac{2^{k+N-2}}{B(\frac{N-1}{2},\frac{1}{2})} B(\frac{k+N-1}{2},\frac{N-1}{2}) R^k
\end{equation}

\begin{figure*}[htb]
\begin{center}
\begin{tabular}{ccc}
\includegraphics[width=0.33\textwidth]{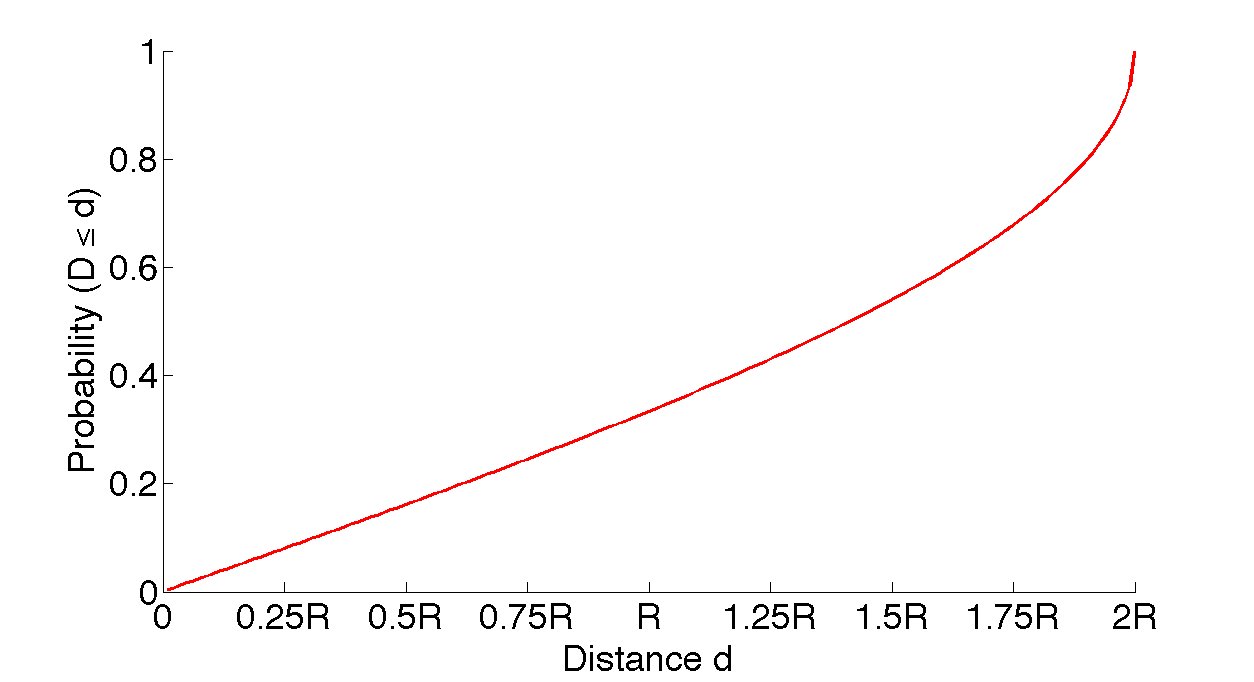} & \includegraphics[width=0.33\textwidth]{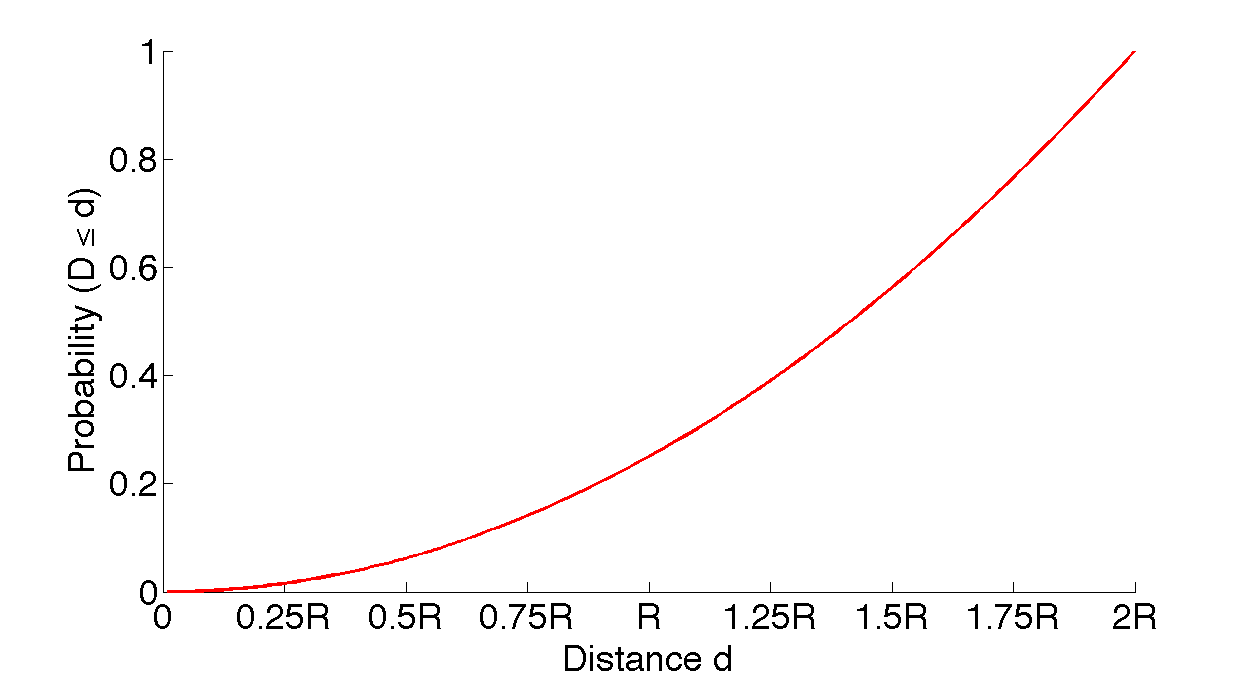} & \includegraphics[width=0.33\textwidth]{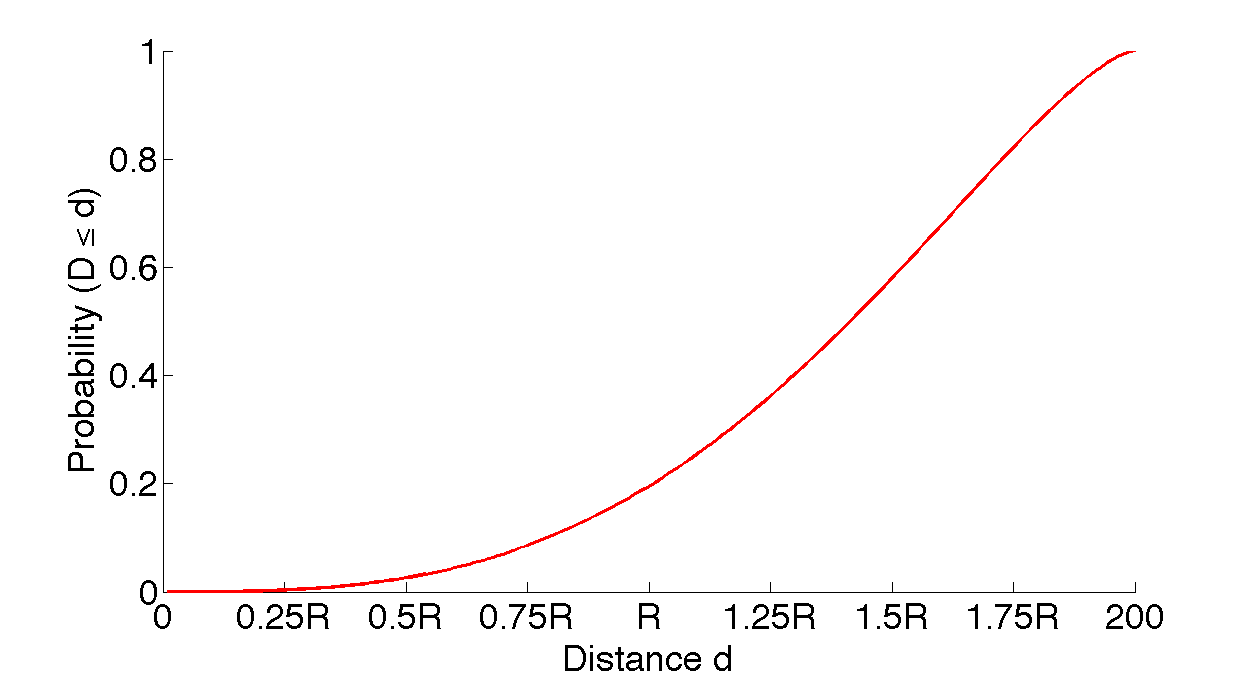} \\
(a) & (b) & (c) \\
\includegraphics[width=0.33\textwidth]{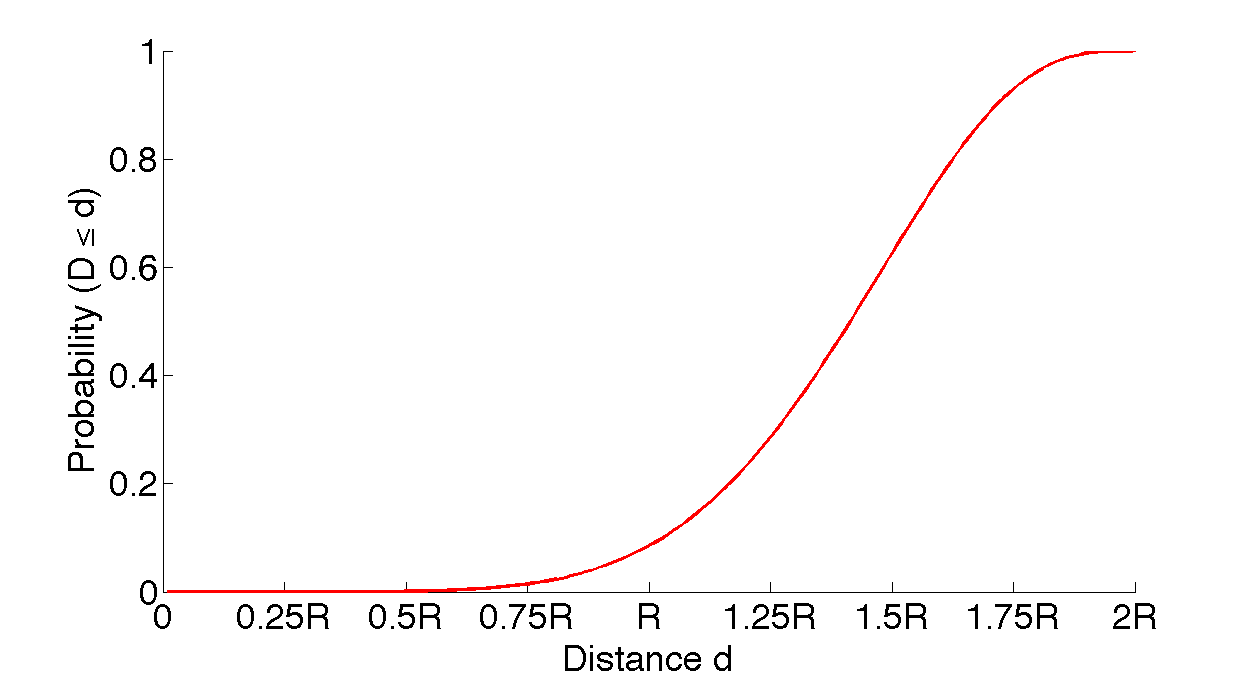} & \includegraphics[width=0.33\textwidth]{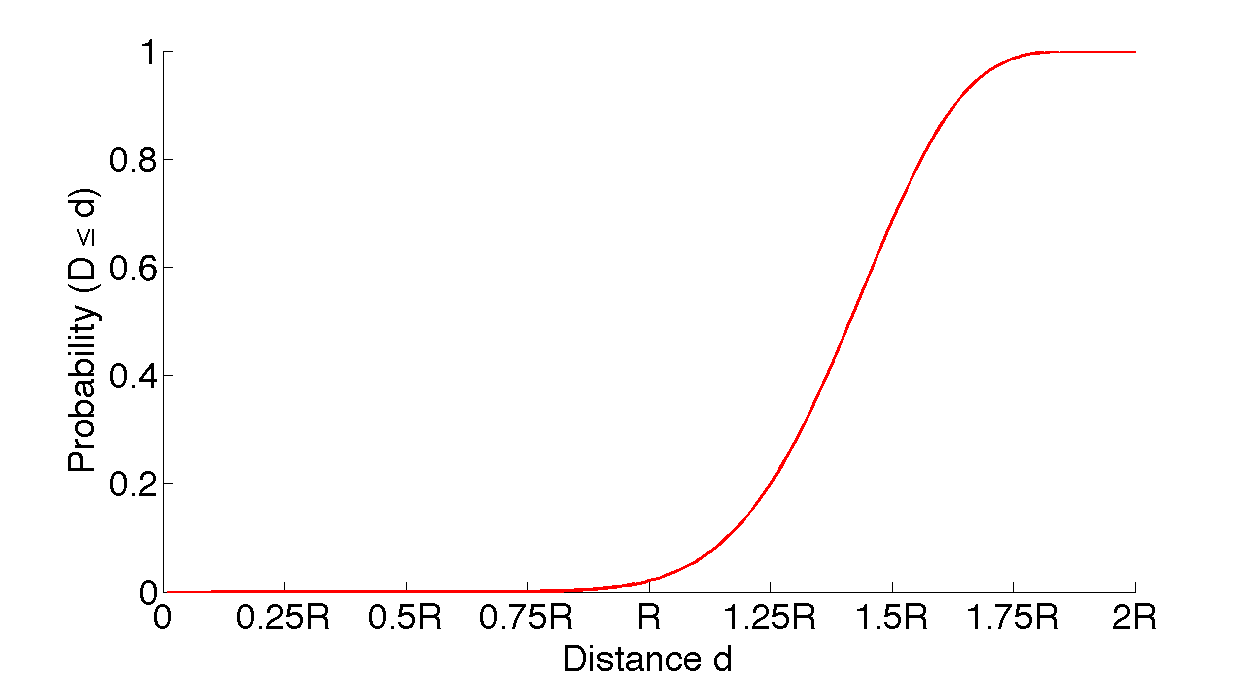} & \includegraphics[width=0.33\textwidth]{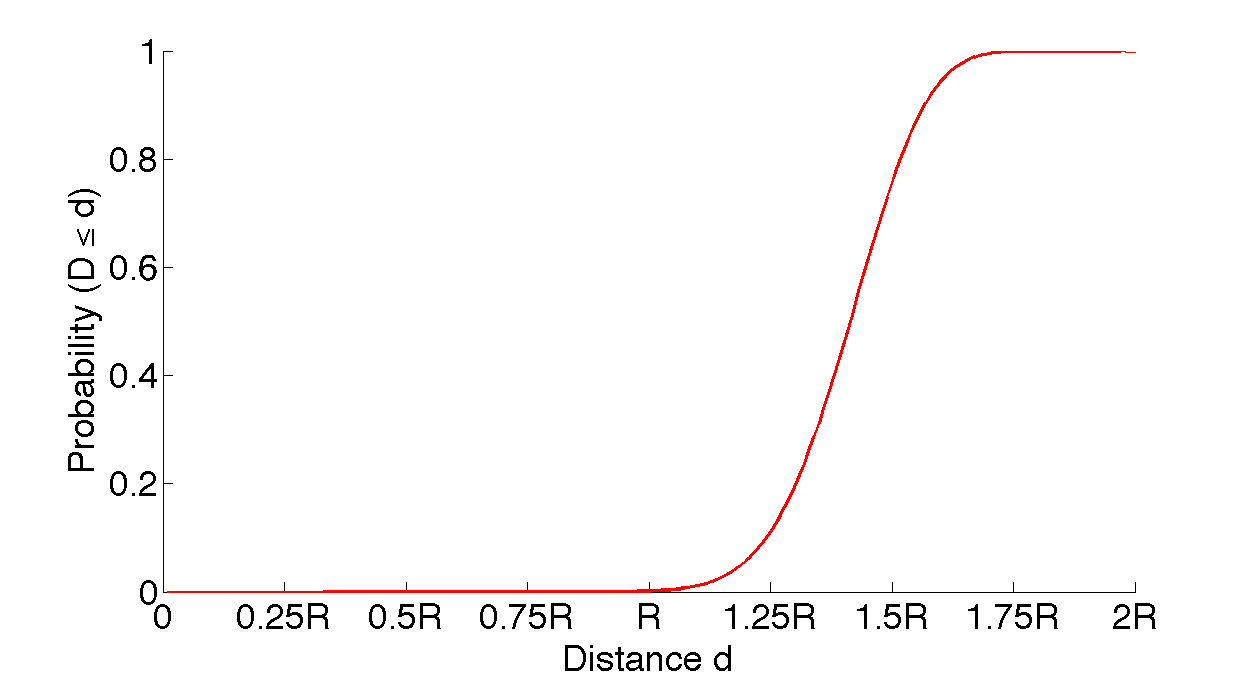} \\
(d) & (e) & (f) \\
\end{tabular}
\end{center}
\caption {The cumulative distribution functions. (a) $F_2(d)$ (b) $F_3(d)$ (c) $F_4(d)$ (d) $F_8(d)$ (e) $F_16(d)$ (f) $F_32(d)$.}
\label{fig:cdf_examples}
\end{figure*}

\begin{figure*}[htb]
\begin{center}
\begin{tabular}{ccc}
\includegraphics[width=0.33\textwidth]{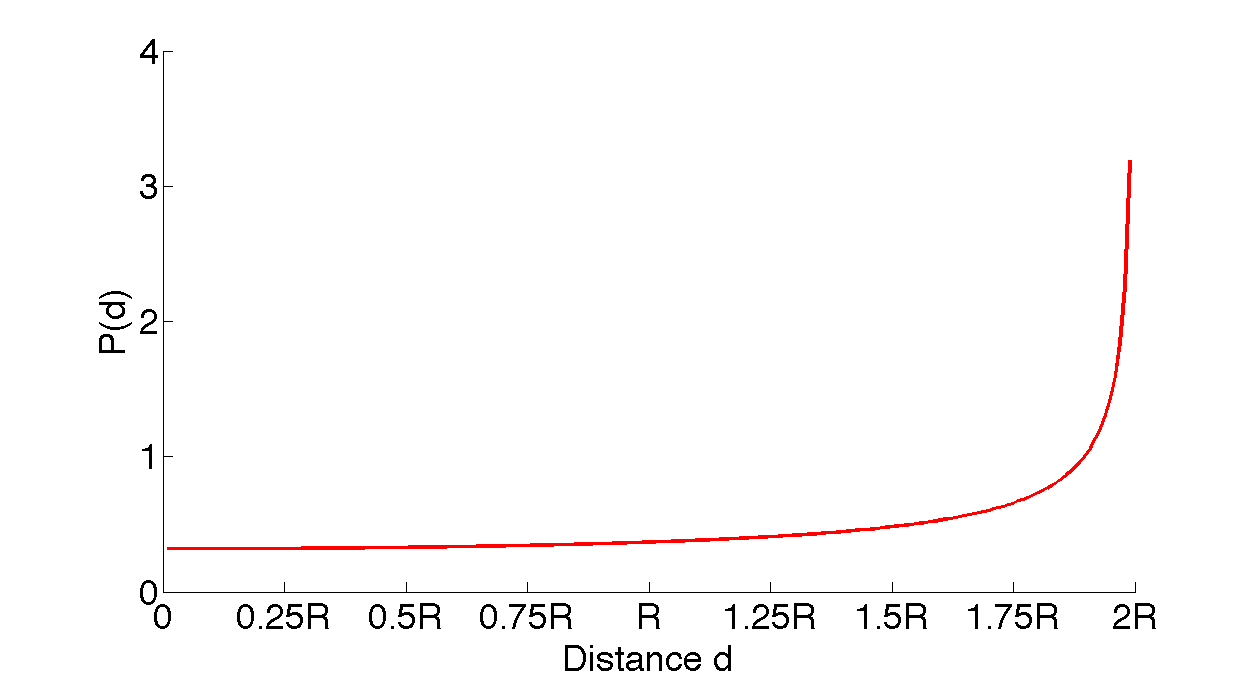} & \includegraphics[width=0.33\textwidth]{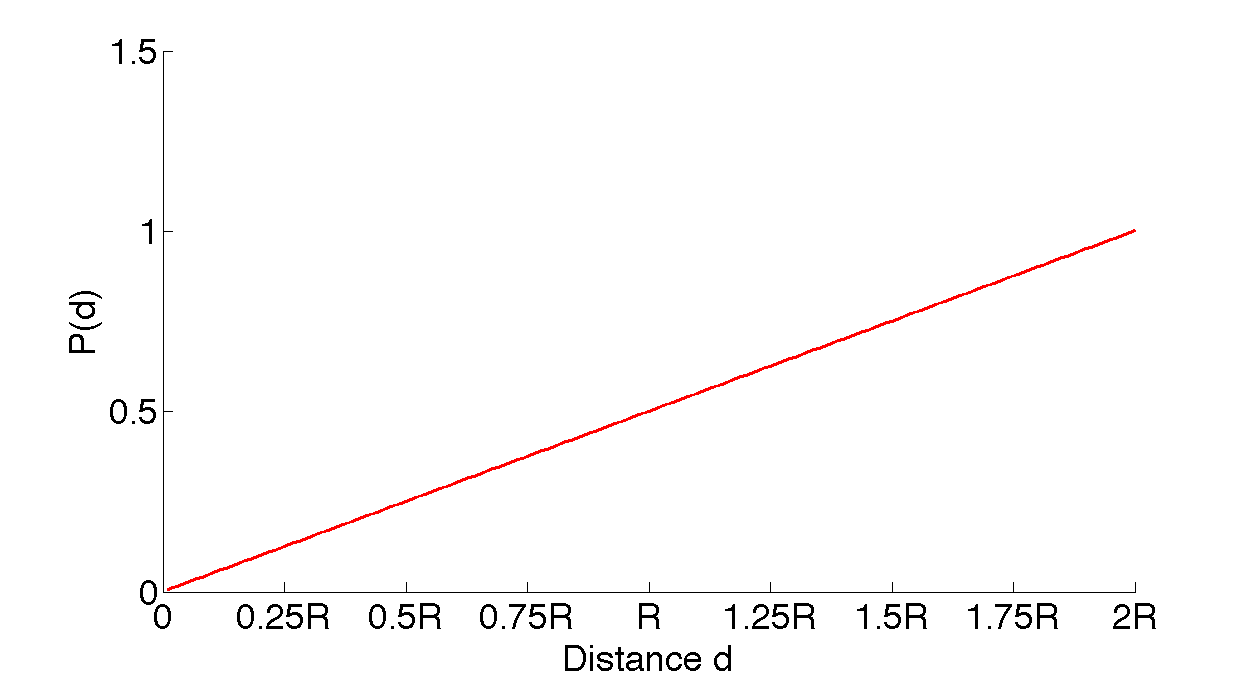} & \includegraphics[width=0.33\textwidth]{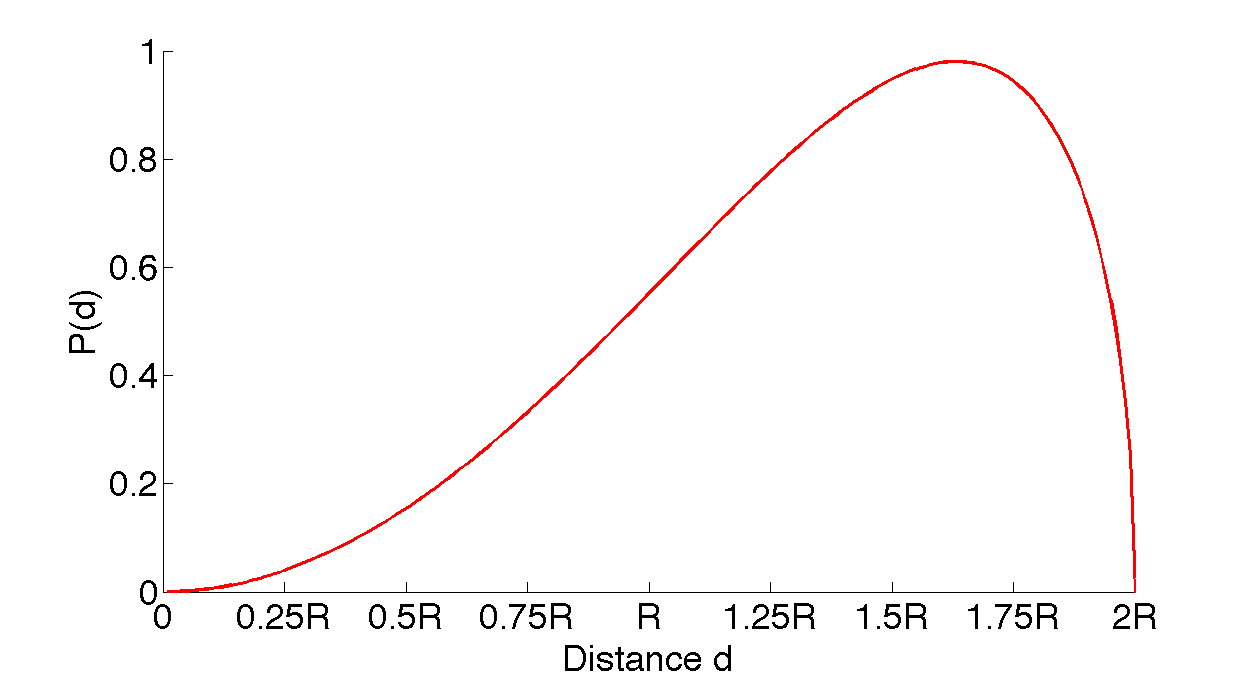} \\
(a) & (b) & (c) \\
\includegraphics[width=0.33\textwidth]{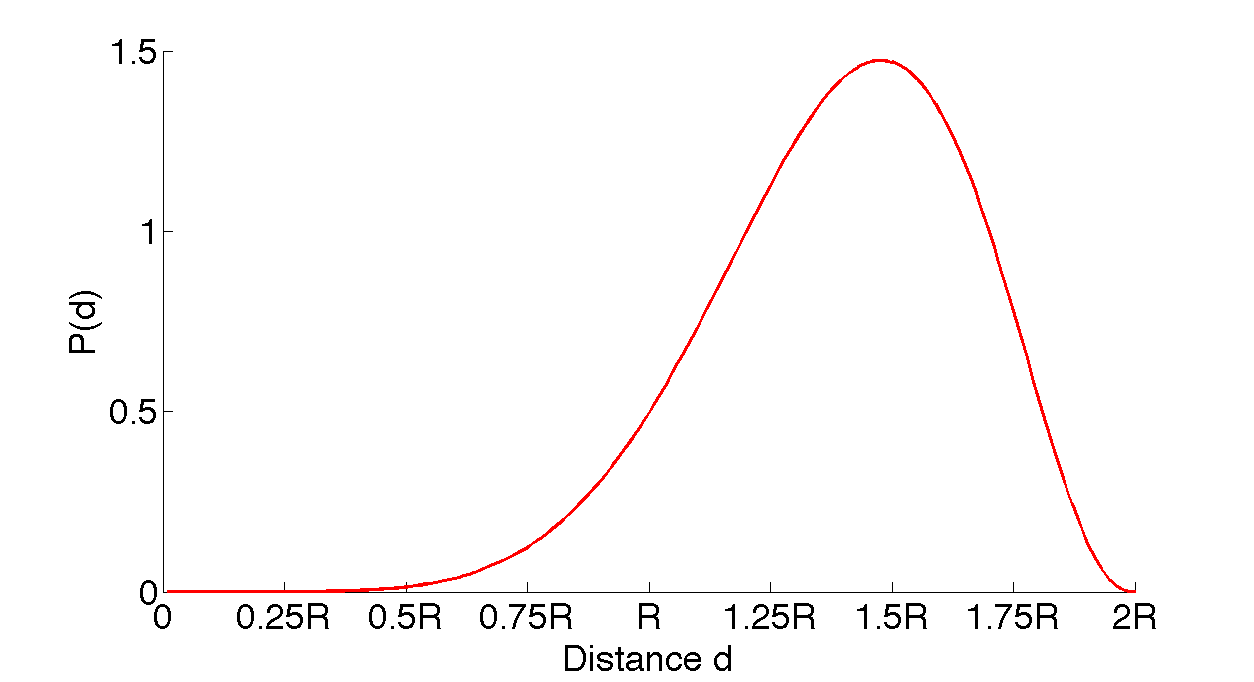} & \includegraphics[width=0.33\textwidth]{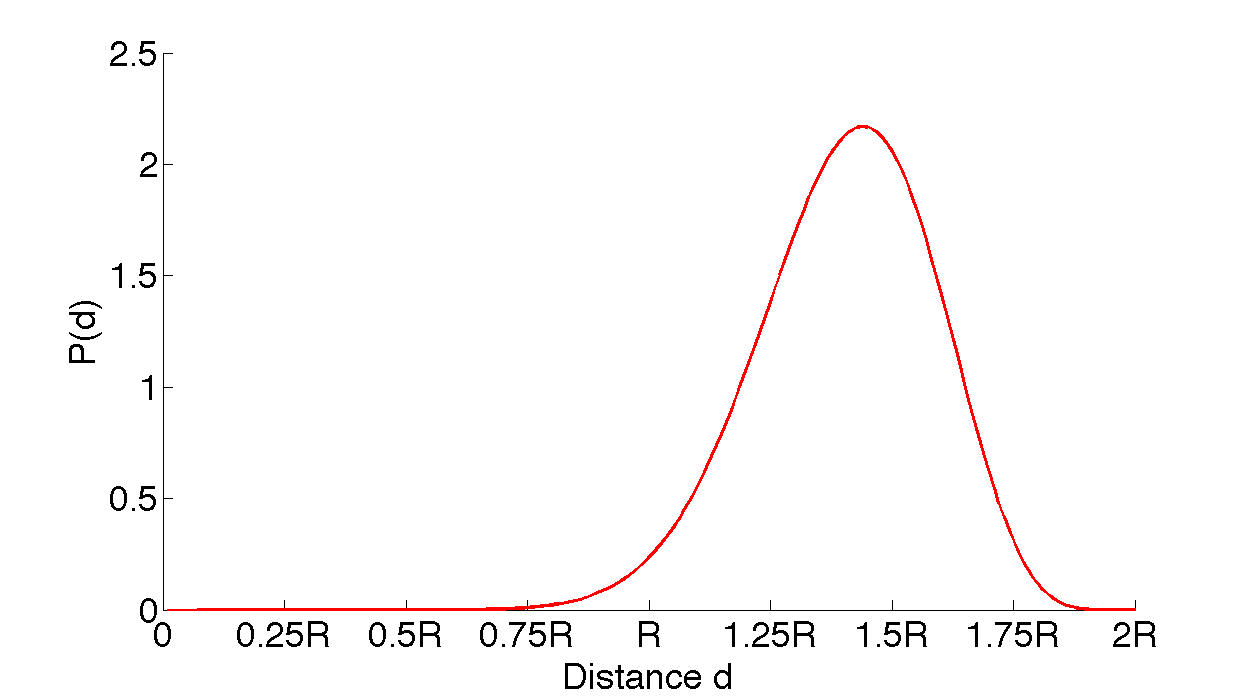} & \includegraphics[width=0.33\textwidth]{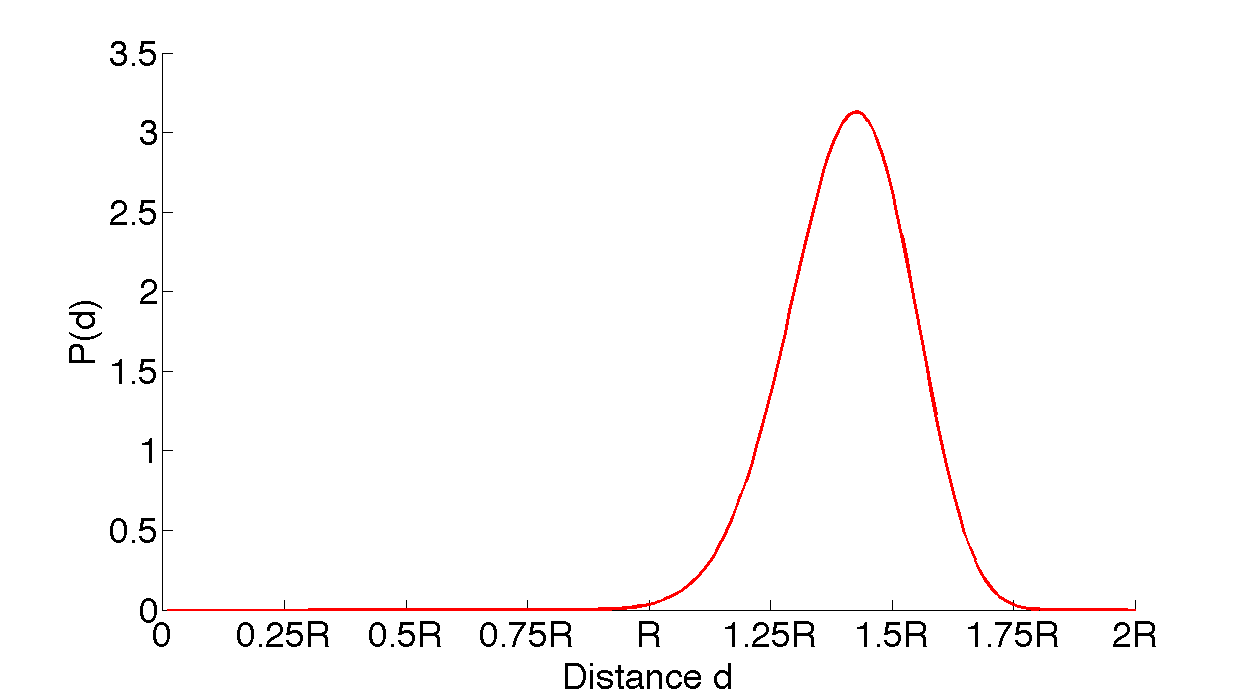} \\
(d) & (e) & (f) \\
\end{tabular}
\end{center}
\caption {The probability density functions. (a) $f_2(d)$ (b) $f_3(d)$ (c) $f_4(d)$ (d) $f_8(d)$ (e) $f_16(d)$ (f) $f_32(d)$.}
\label{fig:pdf_examples}
\end{figure*}

Eq. (\ref{eq:moments_equation}) gives the $k-th$ order moment of the $N$-sphere chord length distribution. For $k=1$, the mean value is given by

\begin{proposition}
\label{proposition2} 
The mean value $\mu$ of the $N$-sphere chord length distribution is
\begin{equation} \label{eq:moments_equation_3}
\mu= \frac{B(\frac{N}{2},\frac{N}{2})}{B(N-\frac{1}{2},\frac{1}{2})}2^{N-1}R
\end{equation}
\end{proposition}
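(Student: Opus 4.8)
The plan is to treat this proposition as the $k=1$ instance of the already-established moment formula~(\ref{eq:moments_equation}), followed by a purely algebraic rearrangement of Beta functions. First I would substitute $k=1$ directly into~(\ref{eq:moments_equation}), which gives
\begin{equation}
\mu = \mu_1 = \frac{2^{N-1}}{B(\frac{N-1}{2},\frac{1}{2})}\, B\!\left(\frac{N}{2},\frac{N-1}{2}\right) R,
\end{equation}
since $2^{1+N-2}=2^{N-1}$ and $\frac{1+N-1}{2}=\frac{N}{2}$. The entire content of the proposition then reduces to verifying the Beta-function identity
\begin{equation}
\frac{B(\frac{N}{2},\frac{N-1}{2})}{B(\frac{N-1}{2},\frac{1}{2})} = \frac{B(\frac{N}{2},\frac{N}{2})}{B(N-\frac{1}{2},\frac{1}{2})}.
\end{equation}

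Next I would expand every Beta function through the relation $B(a,b)=\Gamma(a)\Gamma(b)/\Gamma(a+b)$ already invoked in~(\ref{eq:incomplete_beta}). On the left-hand side the two factors $\Gamma(\frac{N-1}{2})$ cancel, and using the argument sums $\frac{N}{2}+\frac{N-1}{2}=N-\frac{1}{2}$ and $\frac{N-1}{2}+\frac{1}{2}=\frac{N}{2}$ the quotient collapses to
\begin{equation}
\frac{\Gamma(\frac{N}{2})^2}{\Gamma(N-\frac{1}{2})\,\Gamma(\frac{1}{2})}.
\end{equation}
Performing the same expansion on the right-hand side, both $B(\frac{N}{2},\frac{N}{2})$ and $B(N-\frac{1}{2},\frac{1}{2})$ carry a common denominator $\Gamma(N)$ that cancels, and the quotient reduces to exactly the same expression. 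Equality of the two sides then establishes the claimed closed form.

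The computation involves no special-function machinery beyond the Gamma representation of the Beta function, so there is no genuine analytic obstacle; the only point requiring care is the bookkeeping of the half-integer arguments and their pairwise sums, which must be matched correctly on both sides. It is worth noting that the symmetric form $B(\frac{N}{2},\frac{N}{2})$ appearing in the statement is merely a cosmetically convenient rewriting of the $k=1$ moment, obtained by inserting and cancelling the factor $\Gamma(N)$; one could equally leave the mean in the form produced directly by~(\ref{eq:moments_equation}). As an independent sanity check I would evaluate both expressions at a small dimension such as $N=3$, where the elementary pdf~(\ref{eq:3d_pdf}) yields $\mu=\int_0^{2R} d\,f_3(d)\,dd = \tfrac{4R}{3}$, and confirm that the closed form reproduces the same value, thereby guaranteeing that the Gamma-function rearrangement introduces no error.
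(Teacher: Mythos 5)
Your proposal is correct and follows essentially the same route as the paper, which simply obtains the mean by setting $k=1$ in the moment formula~(\ref{eq:moments_equation}). The only difference is that you make explicit the Gamma-function verification of the identity $B(\tfrac{N}{2},\tfrac{N-1}{2})/B(\tfrac{N-1}{2},\tfrac{1}{2}) = B(\tfrac{N}{2},\tfrac{N}{2})/B(N-\tfrac{1}{2},\tfrac{1}{2})$, a rearrangement the paper leaves implicit, so your write-up is if anything more complete than the original.
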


On the contrary, for the second order moment $\mu_2$ the following proposition stands:

\begin{proposition}
\label{theorem2}
The second order moment $\mu_2$ of the $N$-sphere chord length distribution is independent of $N$ and equal to $2R^2$.
\end{proposition}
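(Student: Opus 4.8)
The plan is to specialise the general moment formula (\ref{eq:moments_equation}) to $k=2$ and show that its $N$-dependent prefactor collapses to a constant. Setting $k=2$ gives $\mu_2 = \frac{2^{N}}{B(\frac{N-1}{2},\frac{1}{2})}B(\frac{N+1}{2},\frac{N-1}{2})R^2$, so the claim $\mu_2 = 2R^2$ is equivalent to the Beta-function identity $2^{N-1}\,B(\frac{N+1}{2},\frac{N-1}{2}) = B(\frac{N-1}{2},\frac{1}{2})$. The task therefore reduces to verifying this identity for all $N$.

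First I would rewrite both Beta functions through $B(a,b) = \Gamma(a)\Gamma(b)/\Gamma(a+b)$. Since $\frac{N+1}{2}+\frac{N-1}{2}=N$ and $\frac{N-1}{2}+\frac{1}{2}=\frac{N}{2}$, the common factor $\Gamma(\frac{N-1}{2})$ cancels from the ratio $B(\frac{N+1}{2},\frac{N-1}{2})/B(\frac{N-1}{2},\frac{1}{2})$, leaving $\frac{\Gamma(\frac{N+1}{2})\Gamma(\frac{N}{2})}{\Gamma(N)\Gamma(\frac{1}{2})}$. The decisive step is the Legendre duplication formula $\Gamma(z)\Gamma(z+\frac{1}{2}) = 2^{1-2z}\sqrt{\pi}\,\Gamma(2z)$, applied at $z=\frac{N}{2}$, which yields $\Gamma(\frac{N}{2})\Gamma(\frac{N+1}{2}) = 2^{1-N}\sqrt{\pi}\,\Gamma(N)$. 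Substituting this together with $\Gamma(\frac{1}{2})=\sqrt{\pi}$ cancels the $\Gamma(N)$ and $\sqrt{\pi}$ factors and reduces the ratio to $2^{1-N}$; multiplying by the leading $2^{N-1}$ gives exactly $1$, so that $\mu_2 = 2R^2$ with no residual dependence on $N$.

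I expect the only genuine obstacle to be recognising that the duplication formula is the right tool to absorb the $2^{N}$ factor; once that is in place the remainder is mechanical cancellation. As an independent sanity check, and a conceptually more transparent route, one can bypass the Beta-function machinery entirely by working directly from the relation derived in the proof of Proposition \ref{Lemma1}, namely $d^2 = 2R^2 - 2R\,x_N$, where $x_N$ is the last coordinate of the uniformly distributed second endpoint. Taking expectations gives $\mu_2 = E[d^2] = 2R^2 - 2R\,E[x_N]$, and since the uniform distribution on the sphere is symmetric about the hyperplane $x_N=0$ we have $E[x_N]=0$, whence $\mu_2 = 2R^2$ for every $N$. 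This probabilistic argument both confirms the closed-form computation and explains why the second moment must be dimension-free.
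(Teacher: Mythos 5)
Your main computation is, step for step, the paper's own proof: the paper likewise sets $k=2$ in Eq.~(\ref{eq:moments_equation}), rewrites the Beta ratio as $\mu_2 = 2^N R^2\,\Gamma(\tfrac{N}{2})\Gamma(\tfrac{N+1}{2})/\bigl(\Gamma(\tfrac{1}{2})\Gamma(N)\bigr)$, and invokes the identity $\Gamma(z)\Gamma(z+\tfrac{1}{2})/\bigl(\Gamma(\tfrac{1}{2})\Gamma(2z)\bigr) = 2^{1-2z}$ at $z=\tfrac{N}{2}$, which is precisely the Legendre duplication formula in the form you use; that part is correct and identical in structure. What the paper does \emph{not} contain is your closing ``sanity check,'' and it is arguably the better argument: from $d^2 = 2R^2 - 2Rx_N$ (the relation already derived in the proof of Proposition~\ref{Lemma1}) and the reflection symmetry $x_N \mapsto -x_N$ of the uniform measure on the sphere, one gets $E[x_N]=0$ and hence $\mu_2 = E[d^2] = 2R^2$ directly, with no Beta--Gamma machinery and with a transparent explanation of why the answer cannot depend on $N$. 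The duplication-formula route has the side benefit of doubling as a consistency check on the general moment formula (\ref{eq:moments_equation}), while the symmetry route is more elementary, self-contained, and conceptually explanatory; either one alone is a complete proof.
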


\begin{proof}
By replacing $k=2$ in Eq. (\ref{eq:moments_equation}) we obtain the following:

\begin{equation} \label{eq:moments_equation_2}
\mu_2 = 2^NR^2 \frac{B(\frac{N+1}{2},\frac{N-1}{2})}{B(\frac{N-1}{2},\frac{1}{2})} = 2^NR^2 \frac{\Gamma(\frac{N}{2})\Gamma(\frac{N+1}{2})}{\Gamma(\frac{1}{2})\Gamma(N)}
\end{equation}
where $\Gamma$ is the gamma function. Since it is known that for all $z$ the following property stands \cite{EWeisstein3}:

\begin{equation} \label{eq:gamma_ratio}
\frac{\Gamma(z)\Gamma(z+\frac{1}{2})}{\Gamma(\frac{1}{2})\Gamma(2z)} = 2^{1-2z}
\end{equation}

By replacing $z=\frac{N}{2}$ in Eq. (\ref{eq:moments_equation_2}) it follows that $\mu_2=2R^2$.
\end{proof}

Propositions \ref{theorem2} and \ref{proposition2} imply that the variance $\sigma^2$ is

\begin{equation} \label{eq:gamma_ratio}
\sigma^2 = (2-\frac{B^2(\frac{N}{2},\frac{N}{2})}{B^2(N-\frac{1}{2},\frac{1}{2})}2^{2N-2})R^2 = (2-\frac{\Gamma^4(\frac{N}{2})}{\pi \Gamma^2(N-\frac{1}{2})}2^{2N-2})R^2
\end{equation}

Finally, it should be noted that the Bertrand problem \cite{APapoulis84}, which refers to the probability $P_R$ of a random chord being larger than the radius, can be answered by replacing $d=R$ in Eq. (\ref{eq:hypersphere_cdf}), i.e., 

\begin{equation} \label{eq:bertrand}
P_R = P(d\leq R) = \frac{1}{2}I_{\frac{3}{4}}(\frac{N-1}{2},\frac{1}{2})
\end{equation}
Fig. \ref{fig:bertrand} demonstrates $P_R$ as a function of $N$. More results about the chord length distribution dependence from the hypersphere dimension $N$ are given in the next sub-section.

\begin{figure*}[htb]
\centering
\includegraphics[width=0.8\textwidth]{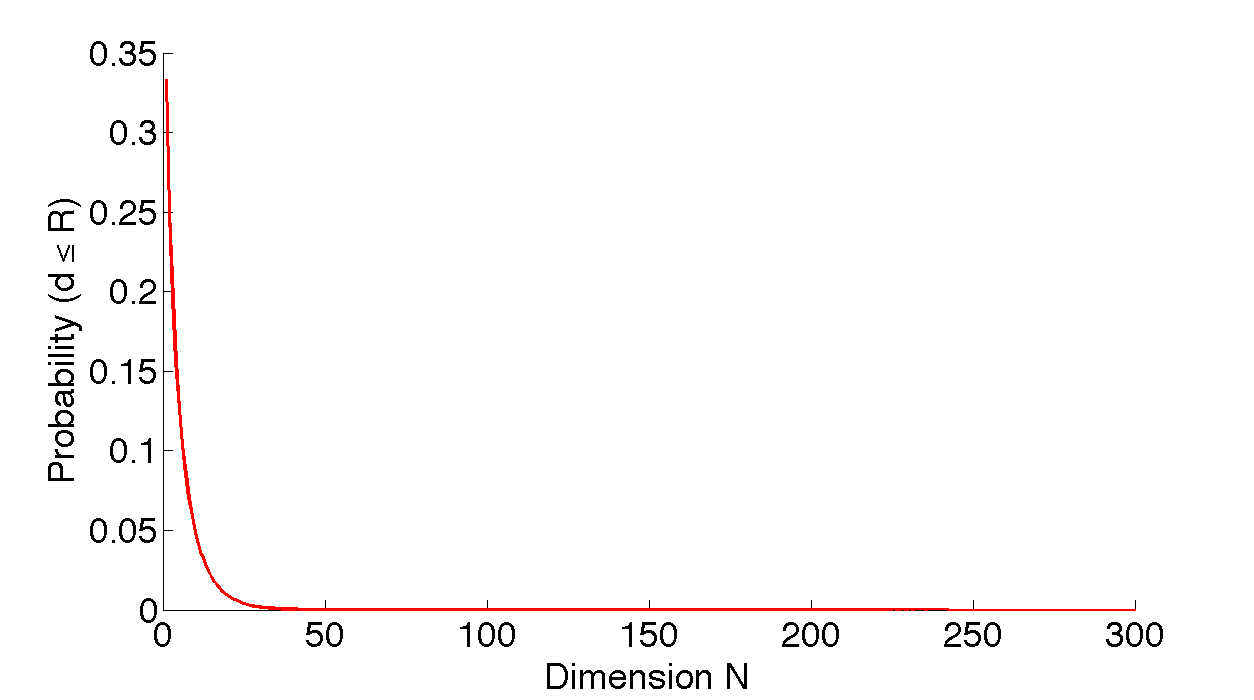}
\caption {The probability that the chord length of a hypersphere is smaller than the hypersphere radius, as a function of the hypersphere dimension.}
\label{fig:bertrand}
\end{figure*}

\subsection{Dependence from the hypersphere dimension}
\label{subsec:dfthd}

The cdf of Eq. (\ref{eq:hypersphere_cdf}) can be recursively estimated using the following formula for regularised incomplete beta functions:

\begin{equation} \label{eq:incomplete_recursice}
I_x(a+1,b) = I_x(a,b) - \frac{x^a(1-x)^b}{aB(a,b)}
\end{equation}
In the $N$-sphere chord length distribution case $a=(N-1)/2$, $b=1/2$ and $x=d^2/R^2-d^4/4R^4$. $x$ is a variable that takes values in the interval [0,1] when $d\leq \sqrt{2}R$. This signifies that the second term of the right part of Eq. (\ref{eq:incomplete_recursice}) is always positive in the interval $d \leq \sqrt{2}R$, i.e., that the cdf scores that correspond to a fixed $d \leq \sqrt{2}R$ reduce with $N$. Similarly, when $d \geq \sqrt{2}R$ the second term of the right part of Eq. (\ref{eq:incomplete_recursice}) is always negative, which means that the cdf scores that correspond to a fixed $d\leq \sqrt{2}R$ increase with $N$. Summarising:

\begin{proposition}
\label{proposition3}
If $P_i(D \leq d)$ is the probability that a chord length is smaller than $d$, when both chord ends lie on a hypersphere of radius $R$ and dimension $N_i$ then \\
\begin{itemize}
\item If $d \leq \sqrt{2}R$, then $P_1(D \leq dR) < P_2(D \leq dR)$ $\Leftrightarrow N_1 < N_2$, \\
\item If $d \geq \sqrt{2}R$, then $P_1(D \leq dR) < P_2(D \leq dR)$ $\Leftrightarrow N_1 > N_2$.
\end{itemize}
\end{proposition}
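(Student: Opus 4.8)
The plan is to recognise that Proposition~\ref{proposition3} is purely a statement about the monotonicity in the dimension $N$ of the cumulative distribution function \eqref{eq:hypersphere_cdf} evaluated at a fixed chord length, and that this monotonicity is controlled entirely by the behaviour of the regularised incomplete beta function $I_x\!\left(\frac{N-1}{2},\frac12\right)$ as $N$ grows, where $x=\frac{d^2}{R^2}-\frac{d^4}{4R^4}$. First I would record the elementary facts about $x$: it increases strictly from $0$ to $1$ as $d$ runs over $[0,\sqrt2 R]$ and then decreases strictly from $1$ back to $0$ as $d$ runs over $[\sqrt2 R,2R]$, so that $x\in(0,1)$ for every admissible $d$ except the three degenerate values $d\in\{0,\sqrt2 R,2R\}$, at which every $F_N$ coincides (in particular $x=1$ at the median $d=\sqrt2 R$, recovering Proposition~\ref{proposition1}). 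Consequently the whole statement reduces to the single analytic fact that, for each fixed $x\in(0,1)$, the map $a\mapsto I_x\!\left(a,\tfrac12\right)$ is \emph{strictly decreasing}. Granting this, the two branches of \eqref{eq:hypersphere_cdf} translate at once: for $d<\sqrt2 R$ we have $F_N(d)=\tfrac12 I_x$, which is strictly decreasing in $N$, whereas for $d>\sqrt2 R$ we have $F_N(d)=1-\tfrac12 I_x$, which is strictly increasing in $N$; reading off the sign of the monotonicity in each branch yields the two equivalences, the strictness providing one implication and the resulting injectivity the other.

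The recursion \eqref{eq:incomplete_recursice} already supplies half of this. Because the subtracted term $\frac{x^{a}(1-x)^{b}}{aB(a,b)}$ is strictly positive for $x\in(0,1)$, we obtain $I_x\!\left(a+1,\tfrac12\right)<I_x\!\left(a,\tfrac12\right)$, i.e. $F_{N+2}(d)<F_N(d)$ when $d<\sqrt2 R$. This is exactly the step of two in $N$, and iterating it already settles Proposition~\ref{proposition3} whenever $N_1$ and $N_2$ share the same parity. The main obstacle is precisely that \eqref{eq:incomplete_recursice} advances the parameter $a=\frac{N-1}{2}$ by a full unit, hence advances $N$ by two, so on its own it can never compare an even dimension with an odd one; I must upgrade this step-two statement to strict monotonicity in the continuous parameter $a$.

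To close the gap I would work directly with the integral in \eqref{eq:incomplete_beta}, viewing $I_x\!\left(a,\tfrac12\right)=\int_0^x g_a(t)\,dt$ as the distribution function at $x$ of the density $g_a(t)=t^{a-1}(1-t)^{-1/2}/B\!\left(a,\tfrac12\right)$ on $[0,1]$. For $a_1<a_2$ the likelihood ratio $g_{a_2}(t)/g_{a_1}(t)\propto t^{\,a_2-a_1}$ is strictly increasing in $t$, so the family $\{g_a\}$ has the monotone likelihood ratio property, which forces strict stochastic ordering and hence $I_x(a_2,\tfrac12)<I_x(a_1,\tfrac12)$ for every $x\in(0,1)$. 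Equivalently, and without probabilistic language, cross-multiplying the two ratios and using $\int_0^1=\int_0^x+\int_x^1$ to cancel the common $[0,x]\times[0,x]$ contribution reduces the claim to
\begin{equation*}
\int_0^x\!\!\int_x^1 \left(t_2^{\,a_2-a_1}-t_1^{\,a_2-a_1}\right)(t_1 t_2)^{a_1-1}(1-t_1)^{-1/2}(1-t_2)^{-1/2}\,dt_2\,dt_1>0,
\end{equation*}
which holds because $t_2>x>t_1$ makes the first factor strictly positive while the remaining factors are positive on $(0,1)^2$ --- a one-line Chebyshev-type rearrangement inequality. Since $a=\frac{N-1}{2}$ is strictly increasing in $N$, strict monotonicity in $a$ is strict monotonicity in $N$, and combined with the branch-by-branch translation of the first paragraph this establishes Proposition~\ref{proposition3} for all $N_1\neq N_2$, regardless of parity.
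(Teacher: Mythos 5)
Your argument is correct, and it is genuinely stronger than the paper's own proof. The paper proves Proposition \ref{proposition3} using only the recursion \eqref{eq:incomplete_recursice}: since $a=(N-1)/2$, that identity advances $N$ by \emph{two}, so the paper's argument can only order dimensions of equal parity (it compares $F_3$ with $F_5$, but never $F_3$ with $F_4$); the paper passes over this silently, whereas you name the limitation explicitly and close it. Your key lemma --- that for each fixed $x\in(0,1)$ the map $a\mapsto I_x(a,\tfrac12)$ is strictly decreasing in the \emph{continuous} parameter $a$, proved via the monotone likelihood ratio property of the densities $t^{a-1}(1-t)^{-1/2}/B(a,\tfrac12)$, or equivalently by your cancellation-plus-positivity computation (the $[0,x]\times[0,x]$ blocks cancel under the $t_1\leftrightarrow t_2$ symmetry, and on the remaining region $t_1<x<t_2$ makes the integrand strictly positive) --- does not appear in the paper and settles all pairs $N_1\neq N_2$ at once. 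The paper's route buys brevity; yours buys a proof of the full statement, with strictness.

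One caveat, concerning the step where you say the monotonicity ``yields the two equivalences.'' What you actually prove --- $F_N(d)$ strictly \emph{decreasing} in $N$ for fixed $d<\sqrt{2}R$ and strictly \emph{increasing} in $N$ for fixed $d>\sqrt{2}R$ --- gives $P_1(D\leq d)<P_2(D\leq d)\Leftrightarrow N_1>N_2$ below the median and $\Leftrightarrow N_1<N_2$ above it, which is the \emph{opposite} orientation to the bullets as printed in Proposition \ref{proposition3}. The printed bullets are evidently swapped: they contradict the paper's own proof text (``the cdf scores that correspond to a fixed $d \leq \sqrt{2}R$ reduce with $N$''), the concentration of the distribution around $\sqrt{2}R$ (Propositions \ref{theorem3} and \ref{proposition4}), and a direct check, e.g. $F_3(R)=\tfrac14 > F_5(R)=\tfrac{5}{32}$. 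So your mathematics is right, but you should state the corrected orientation explicitly rather than assert agreement with the statement as written. Note also that you quietly repair a second slip: for $d\geq\sqrt{2}R$ the subtracted term in \eqref{eq:incomplete_recursice} is still \emph{positive} (since $x$ remains in $[0,1]$ there); what reverses the monotonicity is not a sign change of that term, as the paper claims, but the branch change $F_N=1-\tfrac12 I_x$ in \eqref{eq:hypersphere_cdf}, exactly as you argue.
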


Proposition \ref{proposition3} can be used to bound the degrees of freedom in cases where the only input data are dissimilarity scores, if a process for generating uniform and independent scores is available. Moreover, the above proposition implies that as the dimension increases the pdf of Eq. (\ref{eq:hypersphere_pdf}) becomes more concentrated around $\sqrt{2}R$ (Fig. \ref{fig:pdf_examples}).

The latter is established through the following property of $\mu$:

\begin{proposition}
\label{theorem3}
$$\lim_{N\to\infty} \mu = \sqrt{2}R$$
\end{proposition}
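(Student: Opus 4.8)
The plan is to convert the ratio of Beta functions in Proposition~\ref{proposition2} into Gamma functions, peel off the factor $\sqrt{2}$ \emph{exactly} by means of the Legendre duplication formula, and then show that the surviving ratio of Gamma functions converges to $1$. Writing $B(\frac{N}{2},\frac{N}{2})=\Gamma^2(\frac{N}{2})/\Gamma(N)$ and $B(N-\frac{1}{2},\frac{1}{2})=\sqrt{\pi}\,\Gamma(N-\frac{1}{2})/\Gamma(N)$, the common factor $\Gamma(N)$ cancels and Proposition~\ref{proposition2} becomes
\[
\mu = \frac{\Gamma^2(\frac{N}{2})}{\sqrt{\pi}\,\Gamma(N-\frac{1}{2})}\,2^{N-1}R.
\]

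First I would rewrite the denominator using the same duplication identity already exploited in the proof of Proposition~\ref{theorem2}, namely $\Gamma(2z)=\frac{2^{2z-1}}{\sqrt{\pi}}\Gamma(z)\Gamma(z+\frac{1}{2})$, evaluated at $z=\frac{N}{2}-\frac{1}{4}$; this gives $\Gamma(N-\frac{1}{2})=\frac{2^{N-3/2}}{\sqrt{\pi}}\Gamma(\frac{N}{2}-\frac{1}{4})\Gamma(\frac{N}{2}+\frac{1}{4})$. Substituting into the expression for $\mu$ and collecting the powers of two, note that $2^{N-1}/2^{N-3/2}=2^{1/2}$ while the two $\sqrt{\pi}$ factors cancel, so that
\[
\frac{\mu}{R}=\sqrt{2}\,\frac{\Gamma^2(\frac{N}{2})}{\Gamma(\frac{N}{2}-\frac{1}{4})\,\Gamma(\frac{N}{2}+\frac{1}{4})}.
\]
Everything up to this point is exact, so the entire limiting behaviour is now isolated in the trailing Gamma quotient.

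It then remains to show that this quotient tends to $1$. The tool is the standard asymptotic $\Gamma(w+a)/\Gamma(w+b)\sim w^{\,a-b}$ as $w\to\infty$, a direct consequence of Stirling's formula. Applying it with $w=\frac{N}{2}$ yields $\Gamma(\frac{N}{2})/\Gamma(\frac{N}{2}-\frac{1}{4})\sim(\frac{N}{2})^{1/4}$ and $\Gamma(\frac{N}{2})/\Gamma(\frac{N}{2}+\frac{1}{4})\sim(\frac{N}{2})^{-1/4}$, whose product converges to $1$; hence $\mu/R\to\sqrt{2}$, which is the claim. The only genuine obstacle is justifying this single asymptotic step: one must either invoke the Gamma-ratio estimate as a known result or, equivalently, apply Stirling to all three Gamma factors directly, in which latter route the care point is the estimate $\bigl(\frac{N}{2N-1}\bigr)^{N-1}\sim 2^{-(N-1)}e^{1/2}$, after which the stray $e^{1/2}$ and $e^{-1/2}$ contributions cancel and leave exactly $\sqrt{2}$.
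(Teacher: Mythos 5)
Your proof is correct, and it takes a genuinely different route from the paper's. The paper proves the limit by substituting Stirling's approximation directly into the Beta-function ratio of Proposition~\ref{proposition2} and manipulating the resulting asymptotic expression until the dominant terms cancel; the constant $\sqrt{2}$ only emerges at the end of that bookkeeping (and, in fact, the paper's intermediate displays handle the exponential factors rather loosely --- the $e^{-N}$ and $e^{-(N-1/2)}$ terms silently disappear between Eq.~(\ref{eq:moments_equation_asymptotic}) and Eq.~(\ref{eq:moments_equation_asymptotic2})). You instead perform an exact reduction first: writing $\mu/R = \frac{\Gamma^2(\frac{N}{2})}{\sqrt{\pi}\,\Gamma(N-\frac{1}{2})}2^{N-1}$ and applying the Legendre duplication formula at $z=\frac{N}{2}-\frac{1}{4}$ --- the same identity, Eq.~(\ref{eq:gamma_ratio}), that the paper itself uses to prove Proposition~\ref{theorem2} --- you peel off $\sqrt{2}$ exactly and isolate the entire limit in the quotient $\Gamma^2(\frac{N}{2})/\bigl(\Gamma(\frac{N}{2}-\frac{1}{4})\Gamma(\frac{N}{2}+\frac{1}{4})\bigr)$. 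The only asymptotic input you then need is the standard estimate $\Gamma(w+a)/\Gamma(w+b)\sim w^{a-b}$, applied twice with exponents $\pm\frac{1}{4}$ that cancel. What your approach buys is robustness and transparency: every step before the final limit is an identity, so there is no risk of mishandling exponential factors, and the origin of the constant $\sqrt{2}$ is made algebraically explicit rather than arising from cancellation of Stirling terms. What the paper's approach buys is self-containedness: it needs only Stirling itself, not the Gamma-ratio asymptotic (though as you note, that asymptotic is itself an immediate consequence of Stirling, so the difference in prerequisites is minor). Both proofs are valid; yours is the tighter argument.
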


\begin{proof}
The Stirling approximation for factorials suggests that

\begin{equation} \label{eq:stirling}
\lim_{N\to\infty} N! = \sqrt{2\pi}N^{N+\frac{1}{2}}e^{-N}
\end{equation}

Through Stirling approximation, Eq. (\ref{eq:moments_equation_3}) becomes

\begin{equation} \label{eq:moments_equation_asymptotic}
\lim_{N\to\infty} \mu = \lim_{N\to\infty} \frac{\sqrt{2\pi} [(\frac{N}{2})^{\frac{N-1}{2}}]^2 \sqrt{N-\frac{1}{2}}}{\sqrt{\pi} N^{N-\frac{1}{2}}} 2^{N-1}R
\end{equation}
i.e.,

\begin{equation} \label{eq:moments_equation_asymptotic2}
\lim_{N\to\infty} \mu = \lim_{N\to\infty} \frac{\sqrt{2} R \sqrt{N-\frac{1}{2}}}{\sqrt{N}} = \sqrt{2}R
\end{equation}
\end{proof}

It follows from propositions \ref{theorem2} and \ref{theorem3} that the variance limit when the dimension approaches infinity is zero, i.e.,

\begin{proposition}
\label{proposition4}
$$\lim_{N\to\infty} \sigma^2 = 0$$
\end{proposition}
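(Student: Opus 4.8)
The plan is to exploit the standard identity $\sigma^2 = \mu_2 - \mu^2$, which expresses the variance as the second-order moment minus the square of the mean. This immediately reduces the claim to computing the limits of $\mu_2$ and of $\mu^2$ as $N \to \infty$, both of which are already at hand from the preceding propositions.

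First I would invoke Proposition \ref{theorem2}, which asserts that $\mu_2 = 2R^2$ for every $N$; consequently $\lim_{N\to\infty}\mu_2 = 2R^2$ trivially. Next I would call on Proposition \ref{theorem3}, which establishes $\lim_{N\to\infty}\mu = \sqrt{2}R$. Because squaring is a continuous operation, the limit passes through it, giving $\lim_{N\to\infty}\mu^2 = (\sqrt{2}R)^2 = 2R^2$.

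It then remains only to combine the two ingredients. Since both limits exist and are finite, the limit of their difference equals the difference of the limits, so
\begin{equation}
\lim_{N\to\infty}\sigma^2 = \lim_{N\to\infty}\mu_2 - \lim_{N\to\infty}\mu^2 = 2R^2 - 2R^2 = 0,
\end{equation}
as claimed. I expect no genuine obstacle here: the entire substantive content of the statement is already encoded in Propositions \ref{theorem2} and \ref{theorem3}, and the only additional ingredient is the elementary limit law for differences. The sole point worth a word of justification is the interchange of limit and squaring, which is immediate from continuity, so the argument is essentially a one-line substitution.
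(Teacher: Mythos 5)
Your proof is correct and follows exactly the paper's route: the paper likewise derives Proposition \ref{proposition4} by combining Proposition \ref{theorem2} ($\mu_2 = 2R^2$ for all $N$) with Proposition \ref{theorem3} ($\lim_{N\to\infty}\mu = \sqrt{2}R$) via the identity $\sigma^2 = \mu_2 - \mu^2$. The only difference is that you spell out the elementary limit laws (continuity of squaring, limit of a difference) that the paper leaves implicit.
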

Figures \ref{fig:mean_value} and \ref{fig:variance_value} show $\mu$ and $\sigma^2$ as a function of $N$, respectively. From proposition \ref{proposition4} it follows that as the dimension increases the hypersphere chord lengths are concentrated within a small range around $\sqrt{2}R$. In order to give a quantitative measure of the chord length range, the difference between the largest and the smallest $q$-quantile is employed. Note that this distance denotes the length of the interval from which the $\frac{q-2}{q}$ most central values are selected. The results are shown on Table \ref{tab:quantiles}. Table \ref{tab:quantiles} demonstrates that when $N \gg 2$ most chord lengths lie within a small range around $\sqrt{2}R$. For example, the $93\%$ of chord lengths of a 256-sphere lie inside an interval of length $0.1358R$ around $\sqrt{2}R$, i.e., an interval that spans only $6.79\%$ of the available distance range.

\begin{figure*}[htb]
\centering
\includegraphics[width=0.8\textwidth]{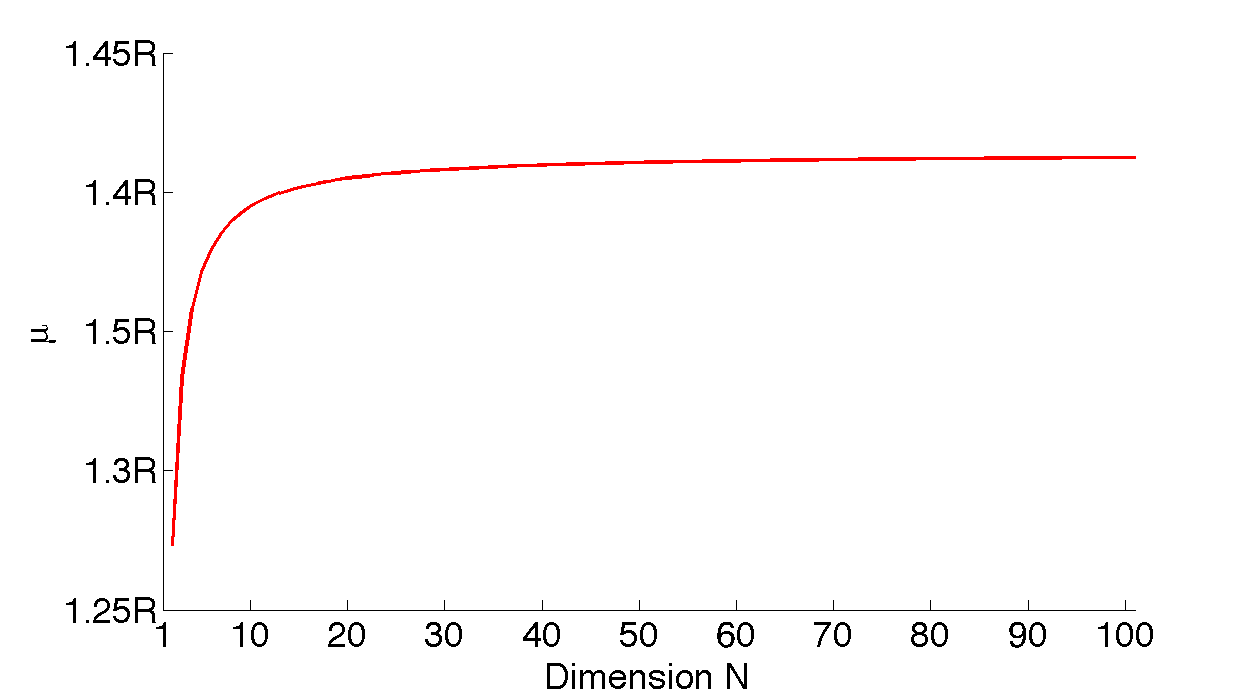}
\caption {The mean chord distance as a function of the hypersphere dimension $N$.}
\label{fig:mean_value}
\end{figure*}

\begin{figure*}[htb]
\centering
\includegraphics[width=0.8\textwidth]{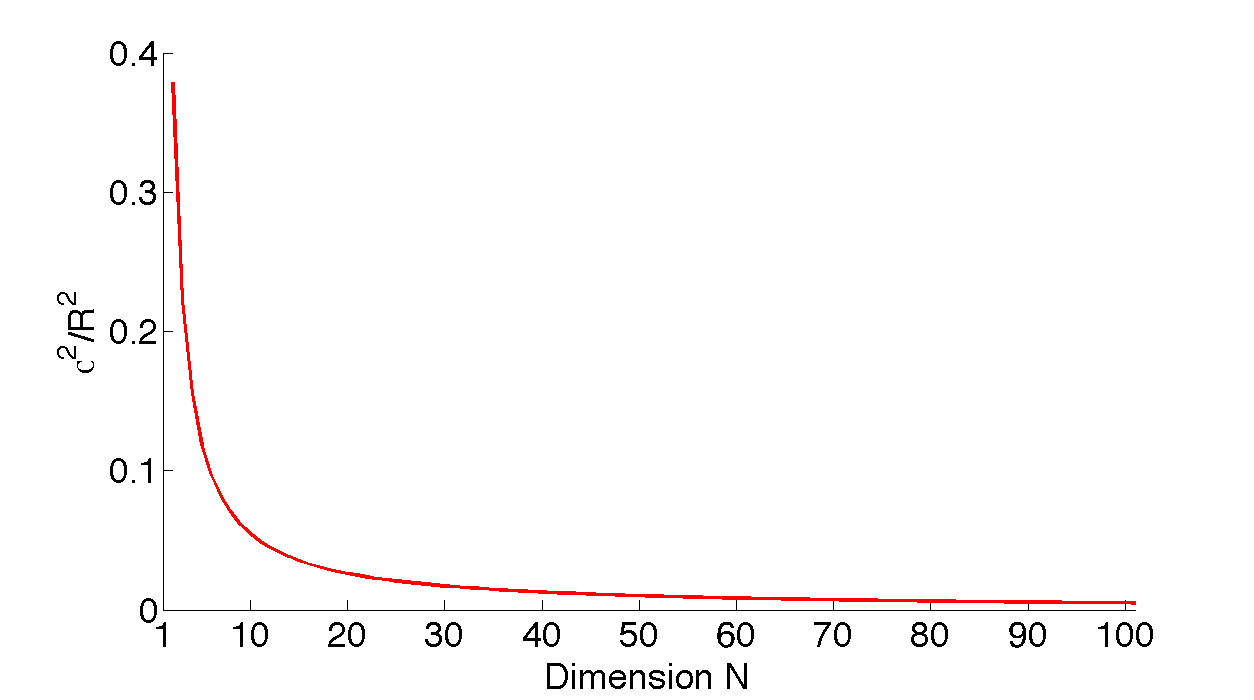}
\caption {The chord distance variance as a function of the hypersphere dimension $N$.}
\label{fig:variance_value}
\end{figure*}

\begin{table}[tb]
\label{tab:quantiles}
\begin{center}
\begin{tabular}{|c|c|c|c|c|c|}
\hline N & $q = 3$ ($33\%$) & $q = 4$ ($50\%$) & $q = 6$ ($67\%$) & $q = 8$ ($75\%$) & $q = 16$ ($93\%$) \\ \hline
2 & 0.7321 & 1.0824 & 1.4142 & 1.5714 & 1.7943 \\ \hline
3 & 0.4783 & 0.7321 & 1.0092 & 1.1637 & 1.4365 \\ \hline
4 & 0.3781 & 0.5839 & 0.8173 & 0.9534 & 1.2101 \\ \hline
8 & 0.2377 & 0.3703 & 0.5261 & 0.621 & 0.8102 \\ \hline
16 & 0.1597 & 0.2494 & 0.3563 & 0.4222 & 0.5581 \\ \hline
32 & 0.1102 & 0.1724 & 0.2468 & 0.293 & 0.389 \\ \hline
64 & 0.077 & 0.1205 & 0.1727 & 0.2052 & 0.2731 \\ \hline
128 & 0.0542 & 0.0848 & 0.1215 & 0.1444 & 0.1925 \\ \hline
256 & 0.0382 & 0.0598 & 0.0857 & 0.1019 & 0.1358 \\ \hline
\hline
\end{tabular}
\end{center}
\caption{Quantile ranges for different $N$ and $q$ (in parentheses is the percentage of distribution values that lie inside the range.} 
\end{table}

An intuitive analysis of the distribution change with $N$ can be done by considering the 3-dimensional sphere as a globe. In this case, the fixed point $p$ is on the North Pole, and the points having distance $\sqrt{2}R$ from $p$ are the points in the equator. Seen under this prism, the concentration of the distribution around $\sqrt{2}R$ denotes an expansion of the equatorial areas, in expense of the high latitude areas. Ideally, when $N \rightarrow \infty$ ``almost all'' points of the hypersphere lie in the equator (meaning that the set of points within distance $\sqrt{2}R$ have infinitely more points than the set of points with distance either larger or smaller than $\sqrt{2}R$).

\section{Unitary vector dot product distribution}
\label{sec:uvdp}

In the hypersphere of radius $1$ the distance of two points $p_1$ and $p_2$ $d_{12}$ and the dot product of the vectors $\overrightarrow{u_1} = \overrightarrow{Op_1}$ and $\overrightarrow{u_2} = \overrightarrow{Op_2}$ (where $O$ is the centre of the hypersphere) are connected via the following relation:

\begin{equation} \label{eq:distance_dotproduct}
d^2_{12} = 2-2\overrightarrow{u_1} \cdot \overrightarrow{u_2}
\end{equation}
Since the distance distribution is already known, the dot product distribution can be estimated through Eq. (\ref{eq:distance_dotproduct}). As a matter of fact, if $F_C$ is the cdf of the unitary dot product $C=\overrightarrow{u_1} \cdot \overrightarrow{u_2}$ then

\begin{equation} \label{eq:distance_dotproduct2}
F_C(c) = P(C \leq c) = P((1-\frac{d^2_{12}}{2}) \leq c) = P(d^2_{12} \geq 2-2c)
\end{equation}
$d_{12}$ values are always positive, thus the above equation can be reformulated as

\begin{equation} \label{eq:distance_dotproduct3}
F_C(c) = P(d_{12} \geq \sqrt{2-2c}) = 1 - P(d_{12} \leq \sqrt{2-2c}) = 1 - F_{d}(\sqrt{2-2c})
\end{equation}

By replacing $d = \sqrt{2-2c}$ in Eq. (\ref{eq:hypersphere_cdf}) it follows that

\begin{equation} \label{eq:unitary distance_cdf}
\begin{split}
F_C(c) = 1 - \frac{1}{2}I_{1-c^2}(\frac{N-1}{2},\frac{1}{2}), c \leq 0 \\
F_C(c) = \frac{1}{2}I_{1-c^2}(\frac{N-1}{2},\frac{1}{2}), c \geq 0
\end{split}
\end{equation}

The corresponding pdf is

\begin{equation} \label{eq:unitary distance_pdf}
\begin{split}
f_C(c) = \frac{{(1-c^2)}^{\frac{N-3}{2}}}{B(\frac{N-1}{2},\frac{1}{2})}
\end{split}
\end{equation}

The pdf is symmetric around $c=0$, thus all odd-order moments (among which, the average) are independent of $N$ and equal to 0. For the even-order moments, i.e., when $k=2\lambda$, the following formula stands:

\begin{equation} \label{eq:unitary distance_pdf}
\begin{split}
\mu_{2\lambda} = \frac{B(\lambda + \frac{1}{2}, \frac{N-1}{2})} {B(\frac{1}{2}, \frac{N-1}{2})}
\end{split}
\end{equation}

\section{Conclusions}
\label{sec:cr}

In this paper, the probability distribution that follow the $N$-sphere chord lengths was introduced. After estimating the pdf and the cdf, some of its basic properties were examined, among which its dependency from the dimension of the hypersphere. Starting from it, the distribution of the dot product of two randomly selected unitary vectors was also estimated. 


\bibliography{referernces}
\bibliographystyle{plain}

\end{document}